\theoremstyle{plain}
\newtheorem{theorem}{Theorem}[section]
\theoremstyle{definition}
\newtheorem{definition}[theorem]{Definition}
\newtheorem{example}[theorem]{Example}
\theoremstyle{remark}
\begin{document}

\articletype{ARTICLE TEMPLATE}

\title{On two-term hypergeometric recursions with free lower parameters}

\author{
\name{J.~M.\ Campbell\textsuperscript{a}\thanks{CONTACT J.~M. Campbell. Email: jmaxwellcampbell@gmail.com} 
 and Paul Levrie\textsuperscript{b}}
\affil{\textsuperscript{a} Department of Mathematics, Toronto Metropolitan University, 350 Victoria St., Toronto, Ontario, Canada; 
 \textsuperscript{b} Department of Computer Science, KU Leuven, Celestijnenlaan 200A, 3001 Heverlee (Leuven), Belgium, and Faculty of Applied Engineering, UAntwerpen, Groenenborgerlaan 171, 2020 Antwerp, Belgium}
}

\maketitle

\begin{abstract}
 Let $F(n,k)$ be a hypergeometric function that may be expressed so that $n$ appears within initial arguments of inverted Pochhammer symbols, as in factors of the form $\frac{1}{(n)_{k}}$. Only in exceptional cases is $F(n, k)$ such that Zeilberger's algorithm produces a two-term recursion for $\sum_{k = 0}^{\infty} F(n, k)$ obtained via the telescoping of the right-hand side of a difference equation of the form $p_{1}(n) F(n + r, k) + p_{2}(n) F(n, k) = G(n, k+1) - G(n, k)$ for fixed $r \in \mathbb{N}$ and polynomials $p_{1}$ and $p_{2}$. Building on the work of Wilf, we apply a series acceleration technique based on two-term hypergeometric recursions derived via Zeilberger's algorithm. Fast converging series previously given by Ramanujan, Guillera, Chu and Zhang, Chu, Lupa{\c{s}}, and Amdeberhan are special cases of hypergeometric transforms introduced in our article.
\end{abstract}

\begin{keywords}
Difference equation; series acceleration; Zeilberger's algorithm; hypergeometric series; closed form
\end{keywords}

\section{Introduction}
Zeilberger's algorithm \cite[\S6]{PetkovsekWilfZeilberger1996} is of great importance in terms of the use of telescoping in the study of and application of 
hypergeometric functions. Recent applications of Zeilberger's algorithm have concerned computational problems in the reduction of radicals 
\cite{Girstmair2021}, the study of families of orthogonal polynomials that generalize the Askey scheme \cite{LamiriWeslati2021}, and the covering of graphs 
with affine hyperplanes \cite{SziklaiWeiner2023}. The importance of Zeilberger's algorithm inside and outside of the field of symbolic computation is evidenced 
by the influence of Paule and Schorn's Mathematica implementation of Zeilberger's algorithm \cite{PauleSchorn1995}, the influence of Koornwinder's work 
concerning the Maple implementation of Zeilberger's algorithm and the $q$-analogue of Zeilberger's algorithm \cite{Koornwinder1993}, and the applications 
of variants and extensions of Zeilberger's algorithm, as in with the extended version of Zeilberger's algorithm due to Chen, Hou, and Mu 
\cite{ChenHouMu2012}. The uses of Zeilberger's algorithm in the symbolic computation discipline motivate the development of Zeilberger-based techniques 
for determining closed forms that are of interdisciplinary significance in terms of how such closed forms may be applied within numerical analysis, and in 
terms of the evaluation of number-theoretic constants. The main purpose of this article is to apply a method based on Zeilberger's algorithm, building on 
the work of Wilf \cite{Wilf1999}, Mohammed \cite{Mohammed2005}, and Hessami Pilehrood and Hessami Pilehrood 
\cite{HessamiPilehroodHessamiPilehrood2008}, for determining recursions for infinite families of infinite hypergeometric series that may be applied so as 
to obtain accelerated series for universal constants such as $\pi$ and $\frac{1}{\pi}$. 

Our recursive approach toward the acceleration of hypergeometric series, as described in Section \ref{sectionmethod} below, also builds on the 
previous work by Chu and Zhang in \cite{ChuZhang2014}, Levrie and Nimbran in \cite{LevrieNimbran2018}, and Campbell and Levrie in 
 \cite{CampbellLevrie2023,LevrieCampbell2023}, in which accelerated hypergeometric series were also obtained via hypergeometric recursions. 
A main advantage as to how our accelerations build on past references as in 
\cite{ChuZhang2014,HessamiPilehroodHessamiPilehrood2008,LevrieCampbell2023,LevrieNimbran2018,Mohammed2005,Wilf1999} is given by 
how broadly our Zeilberger-based technique may be applied to hypergeometric functions involving multiple free parameters 
apart from $n$ and $k$, such as 
\begin{equation}\label{Fakbknknk}
F(n, k) = \frac{(a)_k (b)_k}{ (n)_k^2} 
\end{equation}
and 
\begin{equation}\label{Fakbknk2nk}
F(n, k) = \frac{(a)_k (b)_k}{(n)_k (2 n)_k}
\end{equation}
and the many variants of \eqref{Fakbknknk} and \eqref{Fakbknk2nk}
given in Section \ref{sectionMain} below; 
in this regard, by systematically searching through combinations of rational values for the entries in the tuple $(a, b, n)$ 
such that the hypergeometric sum $\sum_{k=0}^{\infty} F(n, k)$ reduces to a universal constant such as $\pi^{\pm 1}$, 
this is indicative of 
an advantage of our using $F$-functions with multiple free parameters as in \eqref{Fakbknknk} 
and \eqref{Fakbknk2nk}, 
since extending or mimicking the methods of Wilf in \cite{Wilf1999} 
using $F$-functions with multiple free parameters 
allows for a greater range of possibilities for closed-form values of $\sum_{k=0}^{\infty} F(n, k)$, 
 with reference to the constants and convergence rates shown in Table \ref{table:1}, 
together with the motivating examples
under consideration in Section \ref{subsectionMotivating} below. 
 Our techniques and applications inspired by Wilf's 
 acceleration method from \cite{Wilf1999} have not been considered in past research influenced by \cite{Wilf1999}, including 
 \cite{ApagoduZeilberger2006,Chu2021,ChuZhang2014,KondratievaSadov2005,Lupas2000,Mingarelli2013}. 

\begin{table}[h!]
	\centering
	\begin{tabular}{|c c c c|} 
		\hline
		$F(n, k)$ & Convergence rate & Constant(s) & Section \\ [0.5ex] 
		$\frac{(a)_k (b)_k}{ (n)_k^2}$ & $\frac{1}{4}$ & $ \frac{1}{\pi}$, $\frac{\sqrt{2}}{\pi}$, $\frac{\sqrt{3}}{\pi}$, $\ln 2$, 
		$G$, $\sqrt{2}$, $\sqrt{3}$, 
		$\pi^2$ & \ref{subsectionakbknknk} \\ [1.5ex] 
		$\frac{(a)_k (b)_k}{(n)_k (2 n)_k}$ & $ \frac{4}{27} $ & $\frac{1}{\pi}$, 
		$\frac{\sqrt{2}}{\pi}$, $\frac{\sqrt{3}}{\pi}$, $\sqrt[3]{2}$, $\sqrt{2}$, $\sqrt{3}$, $\frac{\pi}{\sqrt{3}}$ & \ref{subsectionakbknk2nk} \\ [1.5ex] 
		$\frac{(a)_k (b)_k}{(n)_k (3 n)_k}$ & $ \frac{27}{256} $ & $\frac{1}{\pi}$, $\frac{\sqrt{2}}{\pi}$ 
		& \ref{subsection256} \\ [1.5ex] 
		$\frac{(a)_k (b)_k}{(2 n)_k (3 n)_k}$ & $ \frac{108}{3125} $ & $\frac{\sqrt{2}}{\pi}$, $\frac{\sqrt{3}}{\pi}$, $\pi$ 
		& \ref{subsection3125} \\ [1.5ex] 
		$\frac{(a)_k (b)_k}{(n)_k (a+n)_k}$ & $ \frac{1}{4} $ & $\frac{\pi}{\sqrt{2}}$, $\frac{\pi}{\sqrt{3}}$, $\sqrt[3]{2}$, $\pi$
		& \ref{subsectionaplusn} \\ [1.5ex] 
		$\frac{(a)_k (b)_k}{(n)_k (a+2 n)_k}$ & $ \frac{4}{27} $ & $\frac{\pi}{\sqrt{2}}$, $\frac{\pi}{\sqrt{3}}$, $\pi$ 
		& \ref{subsectionnka2nk} \\ [1.5ex] 
		$ \frac{(a)_k (b)_k}{(2 n)_k (a+n)_k} $ & $ \frac{4}{27} $ & $\sqrt[3]{2}$, $\sqrt{2}$, $\frac{\pi}{\sqrt{3}}$ 
		& \ref{subsectionakbk2nk} \\ [1.5ex] 
		$ \frac{(-1)^k (a)_k (b)_k}{(a+n+1)_k (b+n+1)_k} 
		$ & $ -\frac{1}{4} $ & $G$, $\sqrt[3]{2}$, $\sqrt{2}$, $\sqrt{3}$ 
		& \ref{subsectionminusquarter} \\ [1.5ex] 
		$ \frac{(a)_{k}^{3}}{(n)_{k}^{3}} 
		$ & $ -\frac{1}{27} $ & $\zeta(3)$ 
		& \ref{subnegative27} \\ [1.5ex] 
		\hline 
	\end{tabular}
	\caption{Organization of Section \ref{sectionMain}, based on the acceleration method described in 
		Section \ref{sectionmethod}. We are letting Catalan's constant
		be denoted as $G = 1 - \frac{1}{3^2} + \frac{1}{5^2} - \cdots$, 
		and we let Ap\'{e}ry's constant be denoted as $\zeta(3) = 1 + \frac{1}{2^3} + \frac{1}{3^3} + \cdots$.}
	\label{table:1}
\end{table}

\section{Motivating results and background}\label{subsectionMotivating}
Our method is such that fast converging series evaluations given by Ramanujan \cite{Ramanujan1914}, Guillera \cite{Guillera2008}, Chu and Zhang 
\cite{ChuZhang2014}, Chu \cite{Chu2011}, Chu \cite{Chu2021}, Lupa{\c{s}} \cite{Lupas2000}, and Amdeberhan \cite{Amdeberhan1996} are special cases of our method. 
In addition to our new, recursive proofs of these past results, we highlight a number of our new results, as below. To begin with, we briefly review some 
necessary preliminaries. 

For $\Re(x) > 0$, the $\Gamma$-function is such that $\Gamma(x)=\int_{0}^{\infty} u^{x-1}e^{-u}\,du$ \cite[\S8]{Rainville1960}. 
Our applications of our techniques to obtain the symbolic evaluations in Section \ref{sectionMain} 
often require the use of the reflection formula 
$$ \Gamma(x) \Gamma(1-x) = \frac{\pi}{\sin(\pi x)} $$ 
and the Gauss multiplication formula 
$$ \Gamma(mx) = (2 \pi)^{\frac{1-m}{2}} m^{mx - \frac{1}{2}} \prod_{k=0}^{m-1} 
\Gamma\left( x + \frac{k}{m} \right). $$ 
The Pochhammer symbol $(x)_{n}$ or rising factorial function is such that $(x)_{n} = \frac{\Gamma(x + n)}{\Gamma(x)}$, 
with $(x)_{n} = x (x + 1) \cdots (x + n - 1)$ for $n \in \mathbb{N}$, and it is not uncommon to write 
\begin{equation*}
\left[ \begin{matrix} \alpha, \beta, \ldots, \gamma \vspace{1mm} \\ 
A, B, \ldots, C \end{matrix} \right]_{n} = \frac{ (\alpha)_{n} (\beta)_{n} 
	\cdots (\gamma)_{n} }{ (A)_{n} (B)_{n} \cdots (C)_{n}}. 
\end{equation*}
As indicated above, our article is devoted to providing and applying new techniques 
concerning recursions for hypergeometric series. In this regard, we recall that generalized hypergeometric series
may be defined so that \cite{Bailey1935} 
\begin{equation}\label{pFqdefinition}
{}_{p}F_{q}\!\!\left[ 
\begin{matrix} 
a_{1}, a_{2}, \ldots, a_{p} \vspace{1mm}\\ 
b_{1}, b_{2}, \ldots, b_{q} 
\end{matrix} \ \Bigg| \ x \right] 
= \sum_{n=0}^{\infty} 
\left[ \begin{matrix} a_{1}, a_{2}, \ldots, a_{p} \vspace{1mm} \\ 
b_{1}, b_{2}, \ldots, b_{q} \end{matrix} \right]_{n} \frac{x^{n}}{n!}, 
\end{equation}
 and we let the argument $x$ in \eqref{pFqdefinition}
 be referred to as the rate of convergence for the series in \eqref{pFqdefinition}. 

We are now in a position to present new formulas introduced in this article as in the following motivating examples: 
\begin{align}
\mbox{(\S {\bf \ref{subsectionakbknk2nk}})\ \ } & \frac{768}{\pi } 
= \sum_{j = 0}^{\infty} \left( \frac{4}{27} \right)^{j} 
\left[ \begin{matrix} 
\frac{1}{2}, \frac{3}{4}, \frac{3}{4}, \frac{5}{4}, \frac{5}{4} \vspace{1mm} \\ 
1, \frac{4}{3}, \frac{5}{3}, 2, 2 
\end{matrix} \right]_{j} (368 j^3+952 j^2+810 j+225), \label{motivatingsimilarCZ} \\
\mbox{(\S {\bf \ref{subsectionminusquarter}})\ \ } & \frac{1215}{7} \sqrt[3]{2} = 
\sum_{j = 0}^{\infty} \left( -\frac{1}{4} \right)^{j} \left[ \begin{matrix} 
\frac{2}{3}, \frac{11}{12}, \frac{17}{12}, \frac{13}{6} \vspace{1mm} \\ 
1, \frac{3}{2}, \frac{7}{4}, \frac{9}{4}
\end{matrix} \right]_{j} \left(360 j^2+678 j+299\right), \label{motivatingcube1} \\ 
\mbox{(\S {\bf \ref{subsectionaplusn}})\ \ } & \frac{55 \pi }{18} 
= \sum_{j = 0}^{\infty} \left( \frac{1}{4} \right)^{j} 
\left[ \begin{matrix} 
\frac{5}{6}, \frac{5}{6}, 1 \vspace{1mm} \\ 
\frac{7}{6}, \frac{17}{12}, \frac{23}{12} 
\end{matrix} \right]_{j} 
(18 j^2+23 j+6), \label{55pi18} \\ 
\mbox{(\S {\bf \ref{subsectionakbknk2nk}})\ \ } & \frac{2187 \sqrt{3}}{4 \pi } 
= \sum_{j = 0}^{\infty} \left( \frac{4}{27} \right)^{j} 
\left[ \begin{matrix} 
\frac{1}{3}, \frac{2}{3}, \frac{5}{6}, \frac{7}{6} \vspace{1mm} \\ 
1, \frac{3}{2}, 2, 2 
\end{matrix} \right]_{j} (621 j^3+1503 j^2+1164 j+280), \label{motivatingbbbbbbb} \\ 
\mbox{(\S {\bf \ref{subsectionminusquarter}})\ \ } & 5 \sqrt{3} = 
\sum_{j = 0}^{\infty} \left( -\frac{1}{4} \right)^{j} \left[ \begin{matrix} 
\frac{1}{4}, \frac{2}{3}, \frac{3}{4}, \frac{5}{6} \vspace{1mm} \\ 
\frac{11}{12}, 1, \frac{17}{12}, \frac{3}{2} 
\end{matrix} \right]_{j} \left(60 j^2+51 j+10\right), \label{5sqrt3} \\
\mbox{(\S {\bf \ref{subsectionaplusn}})\ \ } & \frac{4}{3} \sqrt[3]{2} = {}_{2}F_{1}\!\!\left[ 
\begin{matrix} 
\frac{1}{2}, \frac{2}{3} \vspace{1mm}\\ 
\frac{1}{6} 
\end{matrix} \ \Bigg| \ \frac{1}{4} \right], \label{4thirdscube} \\ 
\mbox{(\S {\bf \ref{subsectionakbknk2nk}})\ \ } & \frac{480 \pi }{7 \sqrt{3}} 
= \sum_{j = 0}^{\infty} \left( \frac{4}{27} \right)^{j} 
\left[ \begin{matrix} 
\frac{2}{3}, 1, \frac{3}{2}, \frac{13}{6} \vspace{1mm} \\ 
\frac{4}{3}, \frac{11}{6}, \frac{7}{3}, \frac{7}{3} 
\end{matrix} \right]_{j} (69 j^3+247 j^2+284 j+104), \label{motivatingpiroot} \\
\mbox{(\S {\bf \ref{subsection256}})\ \ } & \frac{33554432 \sqrt{2}}{105 \pi } 
= \sum_{j = 0}^{\infty} \left( \frac{27}{256} \right)^{j} 
\left[ \begin{matrix} 
\frac{1}{4}, \frac{3}{4}, \frac{11}{12}, \frac{13}{12}, \frac{17}{12}, \frac{19}{12} \vspace{1mm} \nonumber \\ 
1, \frac{3}{2}, \frac{5}{3}, 2, 2, \frac{7}{3} 
\end{matrix} \right]_{j} \cdot \\ 
& \hspace{0.5in} (175872 j^4+684544 j^3+979360 j^2+609312 j+138567), \label{motivatingnewrate} \\
\mbox{(\S {\bf \ref{subsectionakbk2nk}})\ \ } & 45 \sqrt{2} 
= \sum_{j = 0}^{\infty} \left( \frac{4}{27} \right)^{j} 
\left[ \begin{matrix} 
\frac{3}{8}, \frac{3}{8}, \frac{1}{2}, \frac{7}{8}, \frac{7}{8} \vspace{1mm} \\ 
\frac{3}{4}, 1, \frac{13}{12}, \frac{17}{12}, \frac{7}{4} 
\end{matrix} \right]_{j} 
(1472 j^3+1840 j^2+618 j+45), \label{45sqrt2} \\ 
\mbox{(\S {\bf \ref{subnegative27}})\ \ } & 567 \zeta (3) 
= \sum_{j = 0}^{\infty} \left( -\frac{1}{27} \right)^{j} 
\left[ \begin{matrix} 
\frac{1}{2}, \frac{1}{2}, \frac{1}{2}, 1, 1, 1, 1, 1 \vspace{1mm} \\ 
\frac{5}{4}, \frac{5}{4}, \frac{5}{4}, \frac{4}{3}, \frac{5}{3}, \frac{7}{4}, \frac{7}{4}, \frac{7}{4} 
\end{matrix} \right]_{j} \cdot \nonumber \\ 
& \hspace{0.5in} (7168 j^5+23168 j^4+29584 j^3+18620 j^2+5761 j+698). \label{motivatingzeta1}
\end{align}
The above series of convergence rates $\pm \frac{1}{4}$ and $\frac{4}{27}$ have not previously appeared in relevant literature on accelerated series of 
such convergence rates, as in \cite{Chu2011,Chu2021,Chu2021Ramanujan,ChuZhang2014}. Many of the above results are heavily inspired by the accelerated 
series from references due to Chu et al.\ as in \cite{Chu2011,Chu2021,Chu2021Ramanujan,ChuZhang2014}, in which Zeilberger's algorithm is not involved and WZ 
theory is not involved. For example, the motivating example in 
\eqref{motivatingsimilarCZ} recalls Example 106 from \cite{ChuZhang2014}: 
$$ \frac{32}{\pi } = \sum_{j = 0}^{\infty} \left( \frac{4}{27} \right)^{j} 
\left[ \begin{matrix} \frac{1}{2}, \frac{1}{4}, \frac{1}{4}, \frac{3}{4}, \frac{3}{4} \vspace{1mm} \\ 1, 1, 1, \frac{4}{3}, \frac{5}{3} 
\end{matrix} \right]_{j} (368 j^3 + 400 j^2 + 118 j + 9), $$ 
but our acceleration method 
allows us to evaluate series of convergence rates not involved in references as in 
\cite{Chu2011,Chu2021,Chu2021Ramanujan,ChuZhang2014}, 
 and our acceleration method allows us to determine new proofs of and generalizations of results 
 from references as in \cite{Chu2011,Chu2021,Chu2021Ramanujan,ChuZhang2014}. 
 The 2022 version of the Maple Computer Algebra System (CAS) cannot evaluate 
 any of the new series given in this article or the partial sums for such series, 
 as is the case with respect to the 2022 version of the Mathematica CAS. 

In addition to the work of Chu et al.\ as in \cite{Chu2011,Chu2021,Chu2021Ramanujan,ChuZhang2014}, our above formulas for $\frac{1}{\pi}$ and algebraic 
multiples of $\frac{1}{\pi}$ are directly inspired by the famous formulas for expressions of such forms due to Ramanujan 
\cite{Ramanujan1914}. As in each of the articles by Chu et al.\ among 
\cite{Chu2011,Chu2021,Chu2021Ramanujan,ChuZhang2014}, each of which has heavily motivated our explorations based on 
new results as in \eqref{motivatingsimilarCZ}--\eqref{motivatingzeta1}, 
 we highlight the following formulas due to Ramanujan \cite{Ramanujan1914}
as being sources of inspiration underlying our research. Observe that the convergence rates of $\pm \frac{1}{4}$
in the below formulas due to Ramanujan agree with \eqref{motivatingcube1}, \eqref{55pi18}, 
\eqref{5sqrt3}, and \eqref{4thirdscube}: 
\begin{align}
& \frac{4}{\pi} = \sum_{n=0}^{\infty} \left( \frac{1}{4} \right)^{n} \left[ \begin{matrix} 
\frac{1}{2}, \frac{1}{2}, \frac{1}{2} \vspace{1mm} \\ 
1, 1, 1 
\end{matrix} \right]_{n} \left(6n + 1\right), \label{Ramanujan1} \\ 
 & \frac{8}{\pi} = 
\sum_{n=0}^{\infty} \left( -\frac{1}{4} \right)^{n} \left[ \begin{matrix} 
\frac{1}{4}, \frac{1}{2}, \frac{3}{4} \vspace{1mm} \\ 
1, 1, 1 
\end{matrix} \right]_{n} \left(20n + 3\right). \nonumber 
\end{align}
Our acceleration method described in Section \ref{sectionmethod} and inspired by Wilf's work in \cite{Wilf1999} 
is such that we may apply this method to formulate a new proof of Ramanujan's formula in \eqref{Ramanujan1}. 
This recalls Zeilberger's proof via the WZ method, as opposed to Zeilberger's algorithm, 
for Ramanujan's ${}_{4}F_{3}(-1)$-series for $\frac{1}{\pi}$ \cite{EkhadZeilberger1994}, 
along with Guillera's WZ proof of \eqref{Ramanujan1} \cite{Guillera2006}. 

The ${}_{2}F_{1}\left( \frac{1}{4} \right)$-evaluation shown in \eqref{4thirdscube} is especially notable, in terms of how it stands out against the 
neighbouring formulas among \eqref{motivatingsimilarCZ}--\eqref{5sqrt3} 
and \eqref{motivatingpiroot}, 
since a (nontrivial) polynomial in $n$ is not involved in 
the summand of \eqref{4thirdscube}. 
What is even more striking about the formula for $\sqrt[3]{2}$ in 
\eqref{4thirdscube} is due to how it relates
to the below listed ${}_{2}F_{1}$-formulas from Zucker and Joyce \cite{JoyceZucker2002,ZuckerJoyce2001} that are highlighted 
in the Wolfram MathWorld entry on hypergeometric functions. In this encyclopedia entry, 
 it is suggested how curious it is how ${}_{2}F_{1}$-functions can assume integer root values
for very specific rational arguments and rational parameters, as below, 
again noting the resemblance to our new formula in \eqref{4thirdscube}: 
\begin{align*}
& \frac{2}{3} \sqrt{7} = {}_{2}F_{1}\!\!\left[ 
\begin{matrix} 
\frac{1}{8}, \frac{3}{8} \vspace{1mm}\\ 
\frac{1}{2} 
\end{matrix} \ \Bigg| \ \frac{2400}{2401} \right], \\
& \frac{3}{4} \sqrt{3} = {}_{2}F_{1}\!\!\left[ 
\begin{matrix} 
\frac{1}{6}, \frac{1}{3} \vspace{1mm}\\ 
\frac{1}{2} 
\end{matrix} \ \Bigg| \ \frac{25}{27} \right], \\ 
& \frac{4}{3} 2^{1/6} = {}_{2}F_{1}\!\!\left[ 
\begin{matrix} 
\frac{1}{6}, \frac{1}{2} \vspace{1mm}\\ 
\frac{2}{3} 
\end{matrix} \ \Bigg| \ \frac{125}{128} \right], \\ 
& \frac{3}{4} 11^{1/4} = {}_{2}F_{1}\!\!\left[ 
\begin{matrix} 
\frac{1}{12}, \frac{5}{12} \vspace{1mm}\\ 
\frac{1}{2} 
\end{matrix} \ \Bigg| \ \frac{1323}{1331} \right]. 
\end{align*}

 The above series for $\frac{\sqrt{2}}{\pi}$ of convergence rate $\frac{27}{256}$ shown in \eqref{motivatingnewrate} is 
 also of especial interest. 
 To the best of our knowledge, the only series of convergence rate $\frac{27}{256}$ 
 that have previously appeared are the following series given by Chu \cite{Chu2011} 
 via Dougall's bilateral ${}_{2}H_{2}$-sum. 
 These past series motivate the new Zeilberger-based techniques we have introduced for generating and proving 
 evaluations for series of convergence rate $\frac{27}{256}$: 
\begin{align*}
& \frac{1287 \sqrt{3}}{\pi } 
= \sum_{j = 0}^{\infty} \left( \frac{27}{256} \right)^{j} 
\left[ \begin{matrix} 
\frac{1}{9}, \frac{2}{9}, \frac{4}{9}, \frac{5}{9}, \frac{7}{9}, \frac{8}{9} \vspace{1mm} \\ 
1, 1, 1, \frac{3}{2}, \frac{5}{4}, \frac{7}{4} 
\end{matrix} \right]_{j} \cdot \\ 
& \hspace{1.5in} \text{\footnotesize $(166941j^4 + 260253 j^3 + 130464 j^2 + 23202 j + 1120)$}, \\
& 105 \pi 
= \sum_{j = 1}^{\infty} \left( \frac{27}{256} \right)^{j} 
\left[ \begin{matrix} 
1, \frac{1}{2}, \frac{1}{6}, \frac{1}{6}, \frac{5}{6}, \frac{5}{6} \vspace{1mm} \\ 
\frac{4}{3}, \frac{5}{3}, \frac{9}{8}, \frac{11}{8}, \frac{13}{8}, \frac{15}{8} 
\end{matrix} \right]_{j} \cdot \\ 
& \hspace{1.5in} \text{\footnotesize $(16488j^5 + 42192 j^4 + 40606 j^3 + 18247j^2 + 3842 j + 317)/j$}, \\
& 60 \pi - 149 
= \sum_{j = 1}^{\infty} \left( \frac{27}{256} \right)^{j} 
\left[ \begin{matrix} 
1, \frac{1}{2}, \frac{1}{6}, \frac{1}{6}, \frac{5}{6}, \frac{5}{6} \vspace{1mm} \\ 
\frac{4}{3}, \frac{5}{3}, \frac{7}{8}, \frac{9}{8}, \frac{11}{8}, \frac{13}{8} 
\end{matrix} \right]_{j} \cdot \\ 
& \hspace{1.5in} \text{\footnotesize $(32976j^5 + 67896j^4 + 52844j^3 + 19156j^2 + 3167 j +180)/j$}. 
\end{align*}

We have adopted a ``Chu-style'' \cite{CampbellMaple}
way of denoting hypergeometric series in the vein of \cite{Chu2011,Chu2021,Chu2021Ramanujan,ChuZhang2014}, 
by expressing universal constants in the form 
\begin{equation}\label{Chustyle}
\sum_{n=0}^{\infty} x^n \left[ \begin{matrix} 
\alpha, \beta, \ldots, \gamma \vspace{1mm} \\ 
A, B, \ldots, C 
\end{matrix} \right]_{n} p(n), 
\end{equation}
for a fixed argument $x$ and for a polynomial (or rational function) 
$p(n)$, and where for a lower Pochhammer symbol 
$ (\ell)_{n}$ in \eqref{Chustyle}, it is not the case that there is an upper Pochhammer symbol $(u)_{n}$ in \eqref{Chustyle} such that $\ell - u$ is 
a member of $\mathbb{Z}$. 

 Our interest
in the series for $\frac{\sqrt{2}}{\pi}$ shown in \eqref{motivatingnewrate}
is also motivated by Ramanujan's famous series for the same value of $\frac{\sqrt{2}}{\pi}$, as below \cite{Ramanujan1914}: 
\begin{align*}
& \frac{\sqrt{2}}{\pi} 
= \frac{4}{9} \sum_{n=0}^{\infty} \left( \frac{1}{3} \right)^{4 n} 
\left[ \begin{matrix} 
\frac{1}{2}, \frac{1}{4}, \frac{3}{4} \vspace{1mm} \\ 
1, 1, 1 
\end{matrix} \right]_{n} (10 n + 1), \\ 
& \frac{\sqrt{2}}{\pi} 
= \frac{4}{9801} \sum_{n=0}^{\infty} \left( \frac{1}{99} \right)^{4 n} 
\left[ \begin{matrix} 
\frac{1}{2}, \frac{1}{4}, \frac{3}{4} \vspace{1mm} \\ 
1, 1, 1 
\end{matrix} \right]_{n} (26390 n + 1103). 
\end{align*}

\section{A Zeilberger-based series acceleration method}\label{sectionmethod}
For the sake of brevity, we assume some basic familiarity with 
Zeilberger's algorithm \cite[\S6]{PetkovsekWilfZeilberger1996} and with bivariate hypergeometric functions and the like. 
We have systematically performed computational experiments with Maple using the package given by the input 
\begin{verbatim}
with(SumTools[Hypergeometric]):
\end{verbatim}
and with the use of bivariate hypergeometric functions $F(n, k)$ that satisfy the following: 

\begin{enumerate}
	
	\item The expression $F(n, k)$ satisfies a first-order recurrence according to Zeilberger's algorithm, with respect to $n$; 
	
	\item The expression $F(n, k)$ may be written so as to contain a factor given by the reciprocal 
	of a Pochhammer symbol involving $n$ as its initial argument; 
	
	\item Letting the phrase \emph{$n$-Pochhammer expression} refer to a Pochhammer symbol $(\alpha)_{\beta}$ such that $\alpha$ is a rational linear 
	combination of variables that include $n$ and such that the coefficient of $n$ is nonzero, the number of lower $n$-Pochhammer expressions in $F(n, k)$ 
	minus the number of upper $n$-Pochhammer expressions in $F(n, k)$ is at least $2$; 
	
	\item Letting $G(n, k)$ denote the companion to $F(n, k)$ obtained via Zeilberger's algorithm, the limit $\lim_{\ell \to \infty} G(n, \ell) $ vanishes for sufficiently 
	large $n$; and 
	
	\item The expression $G(n, 0)$ is non-vanishing. 
	
\end{enumerate}

Given a function $F(n, k)$ satisfying the above conditions, we may write 
\begin{equation}\label{maindifference}
p_{1}(n) F(n + r, k) + p_{2}(n) F(n, k) = G(n, k+1) - G(n, k) 
\end{equation}
for (nonzero) polynomials $p_{1}$ and $p_{2}$ with integer coefficients and for fixed $r \in \mathbb{N}$ 
(typically with $r = 1$). Since this construction is closely related to what is meant by a \emph{Markov--WZ pair}, 
it is appropriate to review the definition of this term given in references such as 
\cite{HessamiPilehroodHessamiPilehrood2008,Mohammed2005}, 
recalling that a function that satisfies a linear recurrence with polynomial coefficients is said to be \emph{P-recursive}. 

\begin{definition}
	A pair $(F, G)$ of bivariate hypergeometric functions is said to be a \emph{Markov--WZ} pair if there 
	is a polynomial $p(n, k)$ of the form 
	$$ p(n, k) = a_{0}(n) + a_{1}(n) k + \cdots + a_{\ell}(n) k^{\ell} $$ 
	for fixed $\ell \in \mathbb{N}_{0}$ and P-recursive functions $a_{0}(n)$, $a_{1}(n)$, $\ldots$, $a_{\ell}(n)$ whereby 
	\begin{equation}\label{Markovdefinition}
	p(n+1, k) F(n+1,k) - p(n, k) F(n, k) = G(n, k+1) - G(n, k). 
	\end{equation}
\end{definition}

So, for the $r = 1$ case of \eqref{maindifference}, 
a pair $(F, G)$ satisfying \eqref{maindifference}
is closely related to what is meant by a Markov--WZ pair, 
but we do not insist that $p_{1}(n) = p_{2}(n+1)$, 
noting the distinction between the difference equation in \eqref{maindifference} 
and the difference equation in \eqref{Markovdefinition}. 

By writing $f(n) = \sum_{k=0}^{\infty} F(n, k)$, a telescoping phenomenon applied to 
\eqref{maindifference} gives us that 
\begin{equation}\label{telescopemain}
p_{1}(n) f(n + r) + p_{2}(n) f(n) = - G(n, 0), 
\end{equation}
which we rewrite as 
\begin{equation}\label{generalf}
f(n) = \frac{- G(n, 0)}{p_{2}(n)} - \frac{p_{1}(n)}{p_{2}(n)} f(n + r). 
\end{equation}
The repeated application of the $f$-recursion in \eqref{generalf} typically has the effect of accelerating the convergence of $f$ 
(cf.\ \cite{Wilf1999}), as explored in this article, 
again for hypergeometric expressions $F(n, k)$ satisfying the above listed conditions. Wilf, in \cite{Wilf1999}, introduced the idea of accelerating series 
using recursions as in \eqref{generalf} derived via Zeilberger's algorithm, and Mohammed \cite{Mohammed2005} explored related accelerated 
techniques. 
In this article, we pursue a full exploration of the acceleration of series using \eqref{generalf} for functions $F(n, k)$ satisfying the 
above conditions and with free parameters apart from $n$ and $k$ involved. 
This has led us to obtain many new hypergeometric transforms that are not considered in Wilf's 
article \cite{Wilf1999} or in related work on Markov--WZ pairs as in \cite{HessamiPilehroodHessamiPilehrood2008,Mohammed2005}. 

Our experimental use of the Maple CAS has led us to discover how our acceleration technique may be broadly applied to functions $F(n, k)$ of the forms 
indicated as follows: 
\begin{equation}\label{alphabetaalphabeta}
F(n, k) = \frac{ (a)_{k} (b)_{k} }{ (\alpha_{1} + \beta_{1} n)_{k} (\alpha_{2} + \beta_{2} n)_{k} } 
\end{equation}
for free parameters $a$ and $b$ and for fixed parameters $\alpha_{1}$, $\beta_{1}$, $\alpha_{2}$, and $\beta_{2}$ that we typically set so that $ 
\beta_{1}, \beta_{2} \in \mathbb{N}$. Our application of our Zeilberger-based acceleration technique using \eqref{alphabetaalphabeta} is such that 
$r = 1$ for the recursion indicated in \eqref{generalf} and for most of the hypergeometric functions $F(n, k)$ considered 
in this article. In such cases, by rewriting \eqref{generalf} as 
\begin{equation}\label{mainrecurrence}
f(n) = g_{1}(n) + g_{2}(n) f(n+1) 
\end{equation}
for rational functions $g_{1}(n)$ and $g_{2}(n)$, the inductive application of \eqref{mainrecurrence} gives us that 
\begin{equation}\label{mainprod}
f(n) = \sum _{j=0}^\infty \left(\prod _{i=0}^{j-1} g_{2}(n + i)\right) g_{1}(n + j) 
\end{equation}
if 
\begin{equation}\label{requiredvanishing}
\lim_{m \to \infty} \prod_{i=0}^{m} g_{2}(n + i) f(n + m + 1) 
\end{equation}
vanishes (cf.\ \cite{Wilf1999}), in which case the equation in \eqref{mainprod} gives us series accelerations for the hypergeometric functions $F(n, k)$ 
considered in this article. It is appropriate to refer to a recursion as in \eqref{mainrecurrence} as a two-term recursion, and this is consistent with 
combinatorial work on general two-term recurrences as in \cite{MansourShattuck2013}. 

 The organization of Section \ref{sectionMain} is summarized in Table \ref{table:1}. The organization of our article, with 
 reference to Table \ref{table:1} and a similarly organized Table in \cite{LevrieCampbell2023}, is inspired by our past work on hypergeometric recursions 
and series accelerations \cite{LevrieCampbell2023}, which did not involve Zeilberger's algorithm, and which relied on ``ad hoc'' and experimentally 
discovered recursions, compared to how systematically we can apply the method given in our current article. 

\section{Main results}\label{sectionMain}
Each of the following subsections is based on a given selection of a hypergeometric function $F$ satisfying the conditions listed 
in Section \ref{sectionmethod}. 

\subsection{An acceleration based on $\frac{(a)_k (b)_k}{ (n)_k^2}$}\label{subsectionakbknknk}
We begin by setting $F(n, k)$ to be as in \eqref{Fakbknknk}, 
and by determining that the function in \eqref{Fakbknknk} 
satisfies the first out of the conditions listed in 
Section \ref{sectionmethod}. 
In this regard, we may apply the Maple implementation of Zeilberger's algorithm to compute the 
companion function $G(n) = R(n, k) F(n, k)$, where 
\begin{equation}\label{Rakbknknk}
R(n, k) = -n^2 \left(-2 n (a+b-k+1)+a b-a k+a-b k+b+3 n^2\right). 
\end{equation}
This leads us toward the following recursion that we are to apply via 
accelerations of series of the form $\sum_{k=0}^{\infty} F(n, k)$. 

\begin{theorem}\label{firstmaintheorem}
	Letting $f(n)$ denote the hypergeometric series $\sum_{k=0}^{\infty} F(n, k)$, where $F(n, k)$ is as in \eqref{Fakbknknk}, the recursion $$ f(n) = 
	\frac{a+b+a b-2 (1+a+b) n+3 n^2}{(a+b-2 n) (1+a+b-2 n)} + \frac{ (a-n)^2 (b - n)^2 }{(a + b - 2 n) (1+a+b-2 n) n^2} f(n+1) $$
	holds if the above 
	series converge. 
\end{theorem}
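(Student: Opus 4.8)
The plan is to verify the recursion directly from the Zeilberger certificate already computed, rather than re-deriving anything from scratch. Since the excerpt tells us that applying Zeilberger's algorithm to $F(n,k) = \frac{(a)_k(b)_k}{(n)_k^2}$ produces the companion $G(n,k) = R(n,k) F(n,k)$ with $R(n,k)$ as in \eqref{Rakbknknk}, the starting point is the Zeilberger relation in the form \eqref{maindifference}, namely $p_1(n) F(n+1,k) + p_2(n) F(n,k) = G(n,k+1) - G(n,k)$, where one reads off $p_2(n) = (a+b-2n)(1+a+b-2n)$ and $p_1(n)$ is the corresponding numerator polynomial; one must also exhibit $p_1(n)$ explicitly (it should come out to $-(a-n)^2(b-n)^2/n^2$ times a suitable normalizing factor, consistent with the coefficient of $f(n+1)$ in the stated recursion).

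First I would confirm the difference equation \eqref{maindifference} holds identically in $k$ for these specific $p_1, p_2, G$. The cleanest way is to divide the whole equation by $F(n,k)$ and reduce everything to a rational-function identity: $F(n+1,k)/F(n,k) = (n)_k^2/(n+1)_k^2 = \bigl(n/(n+k)\bigr)^2$, while $G(n,k+1)/F(n,k) = R(n,k+1)\,F(n,k+1)/F(n,k) = R(n,k+1)\cdot\frac{(a+k)(b+k)}{(n+k)^2}$ and $G(n,k)/F(n,k) = R(n,k)$. So the claim becomes the polynomial identity
\[
p_1(n)\,\frac{n^2}{(n+k)^2} + p_2(n) = R(n,k+1)\,\frac{(a+k)(b+k)}{(n+k)^2} - R(n,k),
\]
which, after clearing the denominator $(n+k)^2$, is a polynomial identity in $k$ (with $n,a,b$ as parameters) that can be checked by expanding both sides — this is the routine calculation I would not grind through here, but it is a finite, mechanical verification and is exactly what Zeilberger's algorithm guarantees.

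Next, with \eqref{maindifference} established, I would sum over $k$ from $0$ to $\infty$. The right-hand side telescopes: $\sum_{k=0}^{\infty}\bigl(G(n,k+1)-G(n,k)\bigr) = \lim_{\ell\to\infty} G(n,\ell) - G(n,0) = -G(n,0)$, using condition (4) that $G(n,\ell)\to 0$ for large $n$ (here $G(n,\ell) = R(n,\ell) F(n,\ell)$, and since $F(n,k)\to 0$ geometrically like $4^{-k}$ while $R(n,\ell)$ is polynomial in $\ell$, this limit is indeed $0$ once the series converges). The left-hand side becomes $p_1(n) f(n+1) + p_2(n) f(n)$. Thus $p_1(n) f(n+1) + p_2(n) f(n) = -G(n,0)$, and since $G(n,0) = R(n,0) F(n,0) = R(n,0)\cdot 1$ (because $(a)_0(b)_0/(n)_0^2 = 1$), I would compute $R(n,0) = -n^2\bigl(-2n(a+b+1) + ab + a + b + 3n^2\bigr) = -n^2(3n^2 - 2(1+a+b)n + a+b+ab)$. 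Dividing through by $p_2(n) = (a+b-2n)(1+a+b-2n)$ and solving for $f(n)$ yields
\[
f(n) = \frac{-G(n,0)}{p_2(n)} - \frac{p_1(n)}{p_2(n)} f(n+1),
\]
and plugging in $-G(n,0) = n^2\bigl(3n^2 - 2(1+a+b)n + a+b+ab\bigr)$ gives, after the $n^2$ cancels against $p_2(n)\cdot n^2$ implicitly, the first term $\frac{3n^2 - 2(1+a+b)n + a+b+ab}{(a+b-2n)(1+a+b-2n)}$ as stated; the second coefficient $\frac{(a-n)^2(b-n)^2}{(a+b-2n)(1+a+b-2n)\,n^2}$ is $-p_1(n)/p_2(n)$ and should match the $p_1(n)$ read off from the certificate.

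The main obstacle — really the only substantive step — is pinning down $p_1(n)$ correctly and confirming the polynomial identity after clearing denominators; the $(n+k)^2$ in several denominators and the need to track the $a,b,n$ dependence through the expansion of $R(n,k+1)(a+k)(b+k)$ make this the error-prone part, though it is conceptually straightforward. A secondary point requiring a sentence of care is the convergence/vanishing hypothesis: one should note that the stated recursion is asserted only "if the above series converge," so it suffices that for such $n$ the tail $\sum_k F(n,k)$ converges and $R(n,\ell)F(n,\ell)\to 0$, both of which follow from the geometric decay of $(a)_k(b)_k/(n)_k^2 \sim C\, k^{a+b-2n-1}/\Gamma(a)\Gamma(b)\cdot$(bounded) — more simply, $(a)_k(b)_k/(n)_k^2$ behaves like $k^{a+b-2n}$ up to constants when $n$ is not near the poles, so convergence forces $\Re(2n - a - b) > 1$ and then polynomial factors are harmless. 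Once those two remarks are in place, the telescoping argument of \eqref{telescopemain}–\eqref{generalf} applies verbatim and the theorem follows.
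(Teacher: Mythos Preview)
Your approach is exactly the paper's: verify the Zeilberger difference equation \eqref{maindifference} for $F(n,k)=(a)_k(b)_k/(n)_k^2$ with the certificate $R(n,k)$ from \eqref{Rakbknknk}, telescope the sum over $k$, and rearrange into \eqref{generalf}. One bookkeeping correction: you misread $p_2(n)$. The actual difference equation (as the paper states it) is
\[
-(a-n)^2(b-n)^2\,F(n+1,k)+n^2(2n-a-b-1)(2n-a-b)\,F(n,k)=G(n,k+1)-G(n,k),
\]
so $p_1(n)=-(a-n)^2(b-n)^2$ and $p_2(n)=n^2(a+b-2n)(1+a+b-2n)$, with the $n^2$ factor sitting in $p_2$, not elsewhere. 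With this in hand, $-G(n,0)/p_2(n)$ gives the first term of the recursion with the $n^2$ cancelling cleanly, and $-p_1(n)/p_2(n)$ gives the coefficient of $f(n+1)$ directly; no ``implicit'' cancellation is needed. (Also, the decay of $F(n,k)$ is polynomial in $k$, as you note in your second pass, not geometric like $4^{-k}$; the latter is the rate of the \emph{accelerated} series, not of the original summand.)
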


\begin{proof}
	Using Zeilberger's algorithm, we may determine that 
	\begin{align*}
	& -(a-n)^2 (b-n)^2 F(n+1,k)+n^2 (-1-a-b+2 n) (-a-b+2 n) F(n,k) \\
	& = G(n,k+1)-G(n,k), 
	\end{align*}
	again writing $G(n, k) = F(n, k) R(n, k)$ for the rational function $R(n, k)$ indicated in \eqref{Rakbknknk}. 
 Applying the partial sum operator $\sum_{k=0}^{m} \cdot$ to both sides of the above equality, 
 the right-hand side telescopes, so that 
	\begin{align*}
	& -(a-n)^2 (b - n)^2 \sum_{k=0}^{m} F(n+1,k) + 
	n^2 (-1-a-b+2 n) (-a-b+2 n) \sum_{k=0}^{m} F(n,k) \\ 
	& = G(n,m+1)-G(n, 0), 
	\end{align*}
	we let $m \to \infty$, with $\lim_{m \to \infty} G(n,m+1)$ vanishing for $n$
	such that $ \sum_{k=0}^{\infty} F(n,k)$ converges. This gives us an equivalent formulation of the desired result, 
	since it is easily seen that \eqref{requiredvanishing} vanishes. 
\end{proof}

Setting $$ g_{1}(n) = \frac{a+b+a b-2 (1+a+b) n+3 n^2}{(a+b-2 n) (1+a+b-2 n)} $$ 
and $$ g_{2}(n) = \frac{(a-n)^2 (b-n)^2}{(a+b-2 n) (1+a+b-2 n) n^2}, $$ 
repeated applications of the hypergeometric recursion in Theorem \ref{firstmaintheorem}
give us that a recursion of the form indicated in \eqref{mainprod} holds. 
This gives us a series acceleration identity that we are to apply as below. 

\begin{example}
	Setting $(a, b, n) = \left( \frac{1}{2}, \frac{1}{2}, 2 \right)$, we obtain a new, Zeilberger-based proof of Ramanujan's formula in \eqref{Ramanujan1}. 
\end{example}

\begin{example}
	Setting $(a, b, n) = \left( \frac{1}{2}, \frac{1}{2}, \frac{3}{2} \right)$, we obtain Guillera's formula 
	\begin{equation}\label{GuillerainChu}
	\frac{\pi^2}{4} = 
	\sum_{n=0}^{\infty} \left( \frac{1}{4} \right)^{n} \left[ \begin{matrix} 
	1, 1, 1 \vspace{1mm} \\ 
	\frac{3}{2}, \frac{3}{2}, \frac{3}{2} 
	\end{matrix} \right]_{j} (3n + 2), 
	\end{equation}
	introduced in \cite{Guillera2008}. 
\end{example}

\begin{example}
	Setting $(a, b, n) = \left( \frac{1}{2}, 1, \frac{3}{2} \right)$, we obtain a new proof of Example 84 
	from Chu and Zhang's article \cite{ChuZhang2014}: 
	\begin{equation*}
	6 G = 
	\sum_{j = 0}^{\infty} \left( \frac{1}{4} \right)^{n} \left[ \begin{matrix} 
	1, 1 \vspace{1mm} \\ 
	\frac{5}{4}, \frac{7}{4} 
	\end{matrix} \right]_{j} \frac{6 j+5}{2 j+1}. 
	\end{equation*}
\end{example}

\begin{example}
	Setting $(a, b, n) = \left( \frac{3}{4}, \frac{5}{4}, 2 \right)$, 
	$$ \frac{2 \sqrt{2}}{\pi } 
	= \sum_{j = 0}^{\infty} \left( \frac{1}{4} \right)^{j} 
	\left[ \begin{matrix} 
	-\frac{1}{4}, -\frac{1}{4}, \frac{1}{4}, \frac{1}{4} \vspace{1mm} \\ 
	\frac{1}{2}, 1, 1, 1
	\end{matrix} \right]_{j} (1-48 j^2). $$
\end{example}

\begin{example}
	Setting $(a, b, n) = \left( \frac{3}{2}, 1, 2 \right)$, we obtain that 
	$$ 3 \ln (2) 
	= \sum_{j = 0}^{\infty} \left( \frac{1}{4} \right)^{j} 
	\left[ \begin{matrix} 
	\frac{1}{2}, \frac{1}{2} \vspace{1mm} \\ 
	\frac{5}{4}, \frac{7}{4}
	\end{matrix} \right]_{j} \frac{3 j+2}{j+1}. $$
\end{example}

\begin{example}
	Setting $(a, b, n) = \left( \frac{1}{3}, \frac{2}{3}, 2 \right)$, we obtain a new proof of 
	Example 9 from Chu's article \cite{Chu2011}, which is reproduced below: 
	$$ \frac{9 \sqrt{3}}{2 \pi } 
	= \sum_{j = 0}^{\infty} \left( \frac{1}{4} \right)^{j} 
	\left[ \begin{matrix} 
	\frac{1}{3}, \frac{1}{3}, \frac{2}{3}, \frac{2}{3} \vspace{1mm} \\ 
	1, 1, 1, \frac{3}{2}
	\end{matrix} \right]_{j} (27 j^2+18 j+2). $$ 
\end{example}

\begin{example}
	Setting $(a, b, n) = \left( \frac{1}{3}, \frac{2}{3}, \frac{3}{2} \right)$, we may obtain that 
	$$ \frac{\sqrt{3}}{2} 
	= \sum_{j = 0}^{\infty} \left( \frac{1}{4} \right)^{j} 
	\left[ \begin{matrix} 
	-\frac{1}{6}, -\frac{1}{6}, \frac{1}{6}, \frac{1}{6} \vspace{1mm} \\ 
	\frac{1}{2}, \frac{1}{2}, \frac{1}{2}, 1
	\end{matrix} \right]_{j} (1+36 j-108 j^2). $$
\end{example}

\begin{example}
	Setting $(a, b, n) = \left( \frac{1}{6}, \frac{5}{6}, 2 \right)$, we may obtain a new proof of Example 15 from Chu's article 
	\cite{Chu2011}: $$ \frac{18}{\pi } = \sum_{j = 0}^{\infty} \left( \frac{1}{4} \right)^{j} \left[ \begin{matrix} \frac{1}{6}, \frac{1}{6}, 
	\frac{5}{6}, \frac{5}{6} \vspace{1mm} \\ 1, 1, 1, \frac{3}{2} \end{matrix} \right]_{j} (108 j^2+72 j+5). $$ 
\end{example}

\begin{example}
	Setting $(a, b, n) = \left( \frac{1}{4}, \frac{3}{4}, \frac{3}{2} \right)$, we may obtain that 
	$$ \frac{\sqrt{2}}{2} 
	= \sum_{j = 0}^{\infty} \left( \frac{1}{4} \right)^{j} 
	\left[ \begin{matrix} 
	-\frac{1}{4}, -\frac{1}{4}, \frac{1}{4}, \frac{1}{4} \vspace{1mm} \\ 
	\frac{1}{2}, \frac{1}{2}, \frac{1}{2}, 1
	\end{matrix} \right]_{j} (1+16 j-48 j^2). $$
\end{example}

By setting 
\begin{equation}\label{2092830322149521PM11A}
F(n, k) = \frac{(a)_k (b)_k}{(k+n) (n)_k^2}, 
\end{equation}
we may obtain many variants of the above series, 
and many similar generalizations by generalizing \eqref{2092830322149521PM11A}. 

\begin{example}
	Setting $(a, b, n) = \left( -\frac{3}{4}, -\frac{1}{4}, 1 \right)$ in the acceleration derived from
	\eqref{2092830322149521PM11A}, we may obtain that 
	$$ \frac{16384 \sqrt{2}}{105 \pi } 
	= \sum_{j = 0}^{\infty} \left( \frac{1}{4} \right)^{j} 
	\left[ \begin{matrix} 
	\frac{5}{4}, \frac{7}{4}, \frac{9}{4}, \frac{11}{4} \vspace{1mm} \\ 
	1, 2, \frac{5}{2}, 3 
	\end{matrix} \right]_{j} (16 j^2+48 j+33). $$ 
\end{example}

\subsection{An acceleration based on $\frac{(a)_k (b)_k}{(n)_k (2 n)_k}$}\label{subsectionakbknk2nk}
We refer to Chu and Zhang's article \cite{ChuZhang2014} on hypergeometric series accelerations derived via the modified Abel lemma and Dougall's 
${}_{5}F_{4}$-sum, together with references therein, such as the seminal work by the Chudnovsky brothers \cite{ChudnovskyChudnovsky1988} and 
the excellent survey by Baruah et al.\ on Ramanujan's series for $\frac{1}{\pi}$ \cite{BaruahBerndtChan2009}, for background material on the historical importance 
of the computational problems concerning the fundamental constant $\pi$. Since we have applied our acceleration method to obtain a new proof of 
Ramanujan's series for $\frac{1}{\pi}$ in \eqref{Ramanujan1}, a new proof of Guillera's series for $\pi^2$ in \eqref{GuillerainChu}, as well as new proofs 
of series for $\frac{1}{\pi}$ and $\frac{\sqrt{3}}{\pi}$ given by Chu and Zhang \cite{ChuZhang2014}, 
with Theorem \ref{firstmaintheorem} and the corresponding acceleration formula in 
 \eqref{mainprod} providing infinite generalizations for all of these past formulas, 
 this inspires further applications of the method from Section \ref{sectionmethod}, 
to obtain faster converging series relative to Section \ref{subsectionakbknknk}. 
In this regard, by setting $F(n, k)$ to be as in \eqref{Fakbknk2nk}, 
and by again applying the acceleration technique indicated in Section \ref{sectionmethod}, 
this leads us to a hypergeometric transform that may be used to generate series of convergence rate $\frac{4}{27} \approx 0.148148$. 

\begin{theorem}
	Letting $f(n) = \sum_{k=0}^{\infty} F(n, k)$ for the hypergeometric function $F(n, k)$ 
	in \eqref{Fakbknk2nk}, 
	the recursion $f(n) = g_{1}(n) + g_{2}(n) f(n+1)$ holds, for 
	$$g_{2}(n) = -\frac{(a-2 n-1) (a-2 n) (a-n) (b-2 n-1) (b-2 n) (b-n)}{2 n^2 (2 n+1) (a+b-3 n-1) (a+b-3 n) (a+b-3 n+1)}$$
	and for $g_{1}(n)$ equals to the expression given by the following Mathematica output. 
	\begin{verbatim}
	-((a n (2 - b (2 + 5 b) + 6 n + 27 b n - 38 n^2) + 
	a^2 (b^2 - 5 b n + 2 n (-1 + 4 n)) + 
	2 n (b + b (3 - 19 n) n + b^2 (-1 + 4 n) + 
	n (-3 + n (-2 + 23 n))))/(
	2 (-1 + a + b - 3 n) (a + b - 3 n) (1 + a + b - 3 n) n))
	\end{verbatim}
\end{theorem}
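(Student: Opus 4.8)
The plan is to mirror exactly the argument used in the proof of Theorem \ref{firstmaintheorem}, since the present statement is the $\frac{(a)_k(b)_k}{(n)_k(2n)_k}$ analogue of that theorem. First I would run the Maple implementation of Zeilberger's algorithm on $F(n,k)=\frac{(a)_k(b)_k}{(n)_k(2n)_k}$ to produce a certificate $R(n,k)$ together with polynomials $\tilde p_1(n),\tilde p_2(n)$ with integer coefficients such that, writing $G(n,k)=R(n,k)F(n,k)$, the first-order difference equation
$$ \tilde p_1(n)\,F(n+1,k) + \tilde p_2(n)\,F(n,k) = G(n,k+1) - G(n,k) $$
holds identically in $k$. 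Here the shift is $r=1$, consistent with the discussion of \eqref{alphabetaalphabeta} in Section \ref{sectionmethod}; the condition that the $n$-Pochhammer deficit is at least $2$ is met because both $(n)_k$ and $(2n)_k$ sit in the denominator and there is no $n$-Pochhammer factor upstairs. The polynomials $\tilde p_1,\tilde p_2$ will carry the factors $n^2(2n+1)$ and $(a+b-3n-1)(a+b-3n)(a+b-3n+1)$ visible in the denominator of $g_2(n)$, and the numerator of $g_2(n)$, namely $-(a-2n-1)(a-2n)(a-n)(b-2n-1)(b-2n)(b-n)$, will be $\tilde p_1(n)$ up to sign.

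Next I would apply the partial-sum operator $\sum_{k=0}^{m}\cdot$ to both sides. The right-hand side telescopes to $G(n,m+1)-G(n,0)$, giving
$$ \tilde p_1(n)\sum_{k=0}^m F(n+1,k) + \tilde p_2(n)\sum_{k=0}^m F(n,k) = G(n,m+1) - G(n,0). $$
Letting $m\to\infty$, the term $G(n,m+1)$ vanishes for $n$ in the range where $\sum_{k=0}^\infty F(n,k)$ converges — this is condition 4 from Section \ref{sectionmethod}, and it holds here since the convergence rate is $\frac{4}{27}<1$ so the tail of $F(n,k)$ decays geometrically, dragging $R(n,k)F(n,k)$ to $0$. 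With $f(n)=\sum_{k=0}^\infty F(n,k)$ this yields $\tilde p_1(n)f(n+1)+\tilde p_2(n)f(n)=-G(n,0)$, i.e. \eqref{telescopemain} with $p_1=\tilde p_1$, $p_2=\tilde p_2$. Solving for $f(n)$ gives $f(n)=g_1(n)+g_2(n)f(n+1)$ with $g_2(n)=-\tilde p_1(n)/\tilde p_2(n)$ and $g_1(n)=-G(n,0)/\tilde p_2(n)$.

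It then remains to verify that these $g_1,g_2$ coincide with the ones in the statement. For $g_2(n)$ this is a direct cancellation of common factors between $\tilde p_1(n)$ and $\tilde p_2(n)$; one should double-check that no spurious common factor has been left in, so that the displayed form is genuinely reduced. For $g_1(n)$, I would evaluate $G(n,0)=R(n,0)F(n,0)=R(n,0)$ since $F(n,0)=1$, and then simplify $-R(n,0)/\tilde p_2(n)$; the denominator $2(-1+a+b-3n)(a+b-3n)(1+a+b-3n)n$ in the Mathematica output matches $\tilde p_2(n)$ after extracting the factor $2n^2(2n+1)$ that cancels against part of $R(n,0)$, and the fifth-degree-in-$\{a,b,n\}$ numerator is precisely $-R(n,0)$ divided by that cancelled part. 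I would present this verification as a routine CAS computation (noting agreement with the quoted Mathematica output) rather than expanding it by hand.

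The main obstacle is not conceptual but bookkeeping: making sure the certificate $R(n,k)$ and the recurrence coefficients $\tilde p_1,\tilde p_2$ returned by Zeilberger's algorithm are normalized so that, after clearing the common factors, one arrives at exactly the $g_2(n)$ displayed and exactly the $g_1(n)$ in the Mathematica output — different normalizations differ by an overall rational factor in $n$, $a$, $b$, which must be tracked carefully. A secondary point requiring a line of justification is the vanishing of the limit \eqref{requiredvanishing}, i.e. that iterating the recursion actually converges to $f(n)$; as in the proof of Theorem \ref{firstmaintheorem} this is "easily seen" from the geometric decay governed by the rate $\frac{4}{27}$ together with the polynomial growth of $\prod_{i=0}^m g_2(n+i)$ being dominated, so I would dispatch it with the same brief remark.
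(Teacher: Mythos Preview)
Your proposal is correct and follows essentially the same route as the paper: run Zeilberger's algorithm on $F(n,k)=\frac{(a)_k(b)_k}{(n)_k(2n)_k}$ to obtain the certificate $R(n,k)$ and polynomials $p_1(n),p_2(n)$ in the $r=1$ difference equation \eqref{maindifference}, telescope the partial sums, pass to the limit, and then read off $g_1(n)=-G(n,0)/p_2(n)$ and $g_2(n)=-p_1(n)/p_2(n)$, with the vanishing of \eqref{requiredvanishing} handled by a one-line remark---exactly as the paper does (it even displays the Mathematica output for $R(n,k)$, $p_1(n)$, $p_2(n)$ explicitly). One small slip to correct in your write-up: the rate $\tfrac{4}{27}$ pertains to the \emph{accelerated} series \eqref{mainprod}, not to the decay of $F(n,k)$ itself; the original sum $\sum_k F(n,k)$ is a ${}_{3}F_{2}$ at argument $1$, so $F(n,k)$ decays only polynomially in $k$ (like $k^{a+b-3n}$), and $G(n,m+1)\to 0$ follows because $R(n,k)$ has merely polynomial growth in $k$ against that decay, not from any geometric factor.
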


\begin{proof}
	We again set $F(n, k)$ to be as indicated in \eqref{Fakbknk2nk}. Applying Zeilberger's algorithm, we obtain a companion function $G(n, k) = R(n, k) F(n, 
	k)$, for a rational function $R(n, k)$ given by the following Mathematica output. 
	\begin{verbatim}
	(1/(k + 2 n))2 n^2 (1 + 
	2 n) ((-1 + b) b (-1 + k) k + (3 (-1 + k) k + b^2 (-2 + 5 k) + 
	b (2 + k - 6 k^2)) n + (-6 + 8 b^2 + b (6 - 28 k) + 
	k (4 + 9 k)) n^2 + (-4 - 38 b + 39 k) n^3 + 46 n^4 + 
	a (-k (-1 + b + b^2 + k - 2 b k) + (2 - b (2 + 5 b) + k + 13 b k - 
	6 k^2) n + (6 + 27 b - 28 k) n^2 - 38 n^3) + 
	a^2 (b^2 + (-1 + k) k + (-2 + 5 k) n + 8 n^2 - b (k + 5 n)))
	\end{verbatim}
	Moreover, Zeilberger's algorithm gives us that a difference equation of the form indicated in \eqref{maindifference} holds for $r = 1$. In this case, the 
	polynomial $p_{1}(n)$ is given by the following Mathematica output. 
	\begin{verbatim}
	(a^3*b^3 - 5*a^3*b^2*n + 8*a^3*b*n^2 - 4*a^3*n^3 - 5*a^2*b^3*n + 
	25*a^2*b^2*n^2 - 40*a^2*b*n^3 + 20*a^2*n^4 + 8*a*b^3*n^2 - 
	40*a*b^2*n^3 + 64*a*b*n^4 - 32*a*n^5 - 4*b^3*n^3 + 20*b^2*n^4 - 
	32*b*n^5 + 16*n^6 - a^3*b^2 + 3*a^3*b*n - 2*a^3*n^2 - a^2*b^3 + 
	10*a^2*b^2*n - 23*a^2*b*n^2 + 14*a^2*n^3 + 3*a*b^3*n - 
	23*a*b^2*n^2 + 48*a*b*n^3 - 28*a*n^4 - 2*b^3*n^2 + 14*b^2*n^3 - 
	28*b*n^4 + 16*n^5 + a^2*b^2 - 3*a^2*b*n + 2*a^2*n^2 - 3*a*b^2*n + 
	9*a*b*n^2 - 6*a*n^3 + 2*b^2*n^2 - 6*b*n^3 + 4*n^4)
	\end{verbatim}
	The corresponding polynomial $p_{2}(n)$ is given by the following Mathematica output. 
	\begin{verbatim}
	(-108*n^6 + 4*a^3*n^3 - 36*a^2*n^4 + 108*a*n^5 + 2*a^3*n^2 - 
	18*a^2*n^3 - 54*n^5 + 6*a^2*b*n^2 + 6*a*b^2*n^2 - 36*a*b*n^3 - 
	2*b*n^2 - 4*b*n^3 + 12*a*b^2*n^3 + 12*a^2*b*n^3 - 72*a*b*n^4 - 
	2*a*n^2 + 12*n^4 + 6*n^3 + 4*b^3*n^3 - 36*b^2*n^4 + 108*b*n^5 + 
	2*b^3*n^2 - 18*b^2*n^3 + 54*b*n^4 + 54*a*n^4 - 4*a*n^3)
	\end{verbatim}
	Manipulating the difference equation in \eqref{maindifference}
	so as to obtain \eqref{telescopemain} for $r = 1$, 
	the equivalent expression in \eqref{generalf} is equivalent to the desired result, 
	as it is easily seen that \eqref{requiredvanishing} vanishes. 
\end{proof}

\begin{example}
	Setting $(a, b, n) = \left( \frac{1}{4}, \frac{3}{4}, 1 \right)$, we may obtain that 
	\begin{align*}
	& \frac{8192 \sqrt{2}}{\pi } = \\
	& \sum_{j = 0}^{\infty} \left( \frac{4}{27} \right)^{j} 
	\left[ \begin{matrix} 
	\frac{1}{4}, \frac{5}{8}, \frac{3}{4}, \frac{7}{8}, \frac{9}{8}, \frac{11}{8} \vspace{1mm} \\ 
	1, \frac{4}{3}, \frac{3}{2}, \frac{5}{3}, 2, 2 
	\end{matrix} \right]_{j} (11776 j^4+36352 j^3+40976 j^2+19856 j+3465). 
	\end{align*}
\end{example}

\begin{example}
	Setting $(a, b, n) = \left( \frac{1}{2}, \frac{1}{2}, 1 \right)$, we may obtain the motivating example highlighted in \eqref{motivatingsimilarCZ}. 
\end{example}

\begin{example}
	Setting $(a, b, n) 
	= \left( \frac{1}{3}, \frac{2}{3}, 1 \right)$, we may obtain 
	the motivating example highlighted in \eqref{motivatingbbbbbbb}. 
\end{example}

\begin{example}
	Setting $(a, b, n) 
	= \left( \frac{1}{6}, \frac{5}{6}, 1 \right)$, we may obtain that 
	\begin{align*}
	\frac{279936}{5 \pi } = \sum_{j = 0}^{\infty} \left( \frac{4}{27} \right)^{j} &
	\left[ \begin{matrix} 
	\frac{1}{6}, \frac{7}{12}, \frac{5}{6}, \frac{11}{12}, \frac{13}{12}, \frac{17}{12} \vspace{1mm} \\ 
	1, \frac{4}{3}, \frac{3}{2}, \frac{5}{3}, 2, 2 
	\end{matrix} \right]_{j} \cdot \\
	& (59616 j^4+184032 j^3+206748 j^2+99324 j+17017). 
	\end{align*}
\end{example}

\begin{example}
	Setting $(a, b, n) 
	= \left( -\frac{3}{4}, -\frac{1}{4}, \frac{1}{2} \right)$, we may obtain that 
	\begin{align*}
	3360 \sqrt{2} 
	= \sum_{j = 0}^{\infty} \left( \frac{4}{27} \right)^{j} & 
	\left[ \begin{matrix} 
	\frac{5}{8}, \frac{3}{4}, \frac{7}{8}, \frac{9}{8}, \frac{5}{4}, \frac{11}{8} \vspace{1mm} \\ 
	\frac{1}{2}, 1, \frac{3}{2}, \frac{3}{2}, \frac{11}{6}, \frac{13}{6} 
	\end{matrix} \right]_{j} \cdot \\
	& (11776 j^4+32256 j^3+30224 j^2+11232 j+1365). 
	\end{align*}
\end{example}

\begin{example}
	Setting $(a, b, n) = \left( -\frac{2}{3}, \frac{2}{3}, \frac{4}{3} \right)$, we may obtain
	the motivating example highlighted in \eqref{motivatingpiroot}. 
\end{example}

\begin{example}
	Setting $(a, b, n) = \left( -\frac{5}{6}, -\frac{1}{6}, \frac{1}{2} \right)$, we may obtain that 
	\begin{align*}
	\frac{25515 \sqrt{3}}{8} = \sum_{j = 0}^{\infty} \left( \frac{4}{27} \right)^{j} & 
	\left[ \begin{matrix} 
	\frac{7}{12}, \frac{2}{3}, \frac{11}{12}, \frac{13}{12}, \frac{4}{3}, \frac{17}{12} \vspace{1mm} \\ 
	\frac{1}{2}, 1, \frac{3}{2}, \frac{3}{2}, \frac{11}{6}, \frac{13}{6} 
	\end{matrix} \right]_{j} \cdot \\
	& (14904 j^4+40824 j^3+38079 j^2+13932 j+1624). 
	\end{align*}
\end{example}

\begin{example}
	Setting $(a, b, n) = \left( -\frac{2}{3}, -\frac{1}{6}, \frac{1}{2} \right)$, we may obtain that 
	\begin{align*}
	3360 \sqrt[3]{2} = \sum_{j = 0}^{\infty} \left( 
	\frac{4}{27} \right)^{j} & \left[ \begin{matrix} \frac{7}{12}, \frac{2}{3}, \frac{5}{6}, \frac{13}{12}, \frac{7}{6}, \frac{4}{3} \vspace{1mm} \\ 
	\frac{1}{2}, 1, \frac{13}{9}, \frac{3}{2}, \frac{16}{9}, \frac{19}{9} \end{matrix} \right]_{j} \cdot \\
	& (14904 j^4+38772 j^3+34434 j^2+12039 j+1330). 
	\end{align*} 
\end{example}

\subsection{Series of convergence rate $\frac{27}{256}$}\label{subsection256}
We now are to set 
\begin{equation}\label{Fakbknk3nk}
F(n, k) = \frac{(a)_k (b)_k}{(n)_k (3 n)_k} 
\end{equation}
and to again apply our Zeilberger-based acceleration method. In this case, the evaluation of the rational certificate corresponding to \eqref{Fakbknk3nk} 
via Zeilberger's algorithm is more unwieldy compared to the $R(n, k)$-expression in Section \ref{subsectionakbknk2nk}, as is the case with the polynomials 
$p_{1}(n)$ and $p_{2}(n)$ involved in \eqref{maindifference} for the pair $(F, G)$ corresponding to \eqref{Fakbknk3nk}. So, for the sake of brevity, 
we leave it to the reader to generate such expressions from \eqref{Fakbknk3nk} using Maple, so as to verify how we may obtain a recursion of the form 
displayed in \eqref{mainrecurrence}. Accelerating $f(n)$ according to \eqref{mainrecurrence}, 
and assigning special values for the entires in the tuple $(a, b, n)$, we may obtain the following. 

\begin{example}
	Setting $(a, b, n) = \left( \frac{1}{4}, \frac{3}{4}, 1 \right)$, we may obtain that the motivating example 
	in \eqref{motivatingnewrate}  holds. 
\end{example}

\begin{example}
	Setting $(a, b, n) = 
	\left( \frac{1}{2}, \frac{1}{2}, 1 \right)$, we may obtain that 
	\begin{align*}
	\frac{327680}{\pi } = \sum_{j = 0}^{\infty} & \left( \frac{27}{256} \right)^{j} \left[ \begin{matrix} \frac{1}{2}, \frac{1}{2}, \frac{5}{6}, \frac{5}{6}, 
	\frac{7}{6}, \frac{7}{6}, \frac{3}{2} \vspace{1mm} \\ 1, \frac{5}{3}, \frac{7}{4}, 2, 2, \frac{9}{4}, \frac{7}{3} \end{matrix} \right]_{j} \cdot \\ 
	& \text{\footnotesize $(131904 j^6+777216 j^5+1886448 j^4+2413504 j^3+1716268 j^2+643120 j+99225)$}. 
	\end{align*}
\end{example}

\begin{example}
	Setting $(a, b, n) = 
	\left( \frac{1}{6}, \frac{5}{6}, 1 \right)$, we may obtain that 
	\begin{align*}
	\frac{17414258688}{77 \pi } 
	= \sum_{j = 0}^{\infty} & \left( \frac{27}{256} \right)^{j} 
	\left[ \begin{matrix} 
	\frac{1}{6}, \frac{13}{18}, \frac{5}{6}, \frac{17}{18}, \frac{19}{18}, \frac{23}{18}, \frac{25}{18}, \frac{29}{18} \vspace{1mm} \\ 
	1, \frac{3}{2}, \frac{5}{3}, \frac{7}{4}, 2, 2, \frac{9}{4}, \frac{7}{3} 
	\end{matrix} \right]_{j} \\
	& \text{\footnotesize $(96158016 j^6+566590464 j^5+1373338800 j^4+1751461056 j^3$} \\
	& \text{\footnotesize $+1238515308 j^2+459977904 j+70018325)$}.
	\end{align*}
\end{example}

\subsection{Series of convergence rate $\frac{108}{3125}$}\label{subsection3125}
We set 
\begin{equation}\label{Fakbk2nk3nk}
F(n, k) = \frac{(a)_k (b)_k}{(2 n)_k (3 n)_k}
\end{equation}
to again obtain a two-term recursion of the form indicated in \eqref{mainrecurrence}, allowing us to apply an acceleration of the form indicated in 
\eqref{mainprod}. Again, the $R(n, k)$- and $p_{1}(n)$- and $p_{2}(n)$-expressions are unwieldy in this case, relative to 
Section \ref{subsectionakbknk2nk}, 
so we again leave it to the reader to verify with Maple how these expressions may be computed so as to obtain
a hypergeometric recurrence of the form indicated in \eqref{mainrecurrence}, 
for a pair $(F, G)$ corresponding to \eqref{Fakbk2nk3nk}. 
Specializing values for the entries of the tuple $(a, b, n)$ yields new results as in the following. 

\begin{example}
	Setting $(a, b, n) = 
	\left( \frac{1}{2}, \frac{1}{2}, \frac{1}{2} \right)$, we may obtain that 
	\begin{align*}
	\frac{14175 \pi }{8} 
	= \sum_{j = 0}^{\infty} 
	& \left( \frac{108}{3125} \right)^{j} 
	\left[ \begin{matrix} 
	\frac{1}{4}, \frac{1}{4}, \frac{1}{3}, \frac{1}{3}, \frac{2}{3}, \frac{2}{3}, \frac{3}{4}, \frac{3}{4}, 1 \vspace{1mm} \\ 
	\frac{11}{10}, \frac{7}{6}, \frac{13}{10}, \frac{3}{2}, \frac{3}{2}, \frac{3}{2}, \frac{17}{10}, \frac{11}{6}, \frac{19}{10} 
	\end{matrix} \right]_{j} \cdot \\
	& \text{\footnotesize $(1737792 j^8+7561536 j^7+14017560 j^6+14432512 j^5 + $} \\
	& \text{\footnotesize $9006976 j^4+3479240 j^3+809366 j^2+103131 j+5472)$}. 
	\end{align*}
\end{example}

\begin{example}
	Setting $(a, b, n) = \left( \frac{1}{3}, \frac{2}{3}, 1 \right)$, we may obtain that 
	\begin{align*}
	\frac{162 \sqrt{3}}{\pi } = \sum_{j = 0}^{\infty} & \left( \frac{108}{3125} \right)^{j} 
	\left[ \begin{matrix} -\frac{2}{9}, -\frac{1}{6}, -\frac{1}{9}, \frac{1}{9}, \frac{1}{6}, \frac{2}{9}, \frac{4}{9}, \frac{5}{9} \vspace{1mm} \\ 
	\frac{1}{2}, \frac{3}{5}, \frac{4}{5}, 1, 1, 1, \frac{6}{5}, \frac{7}{5} 
	\end{matrix} \right]_{j} \cdot \\
	& \text{\footnotesize $(-19794537 j^6-6952473 j^5+1102491 j^4 +$} \\
	& \text{\footnotesize $258147 j^3-17244 j^2+396 j +80)$}.
	\end{align*}
\end{example}

\begin{example}
	Setting $(a, b, n) = 
	\left( \frac{1}{4}, \frac{3}{4}, 1 \right)$, we may obtain that 
	\begin{align*}
	\frac{768 \sqrt{2}}{\pi } 
	= \sum_{j = 0}^{\infty} & \left( \frac{108}{3125} \right)^{j} 
	\left[ \begin{matrix} 
	-\frac{3}{8}, -\frac{1}{4}, -\frac{1}{8}, -\frac{1}{12}, \frac{1}{12}, \frac{1}{8}, \frac{1}{4}, \frac{3}{8}, 
	\frac{5}{12}, \frac{7}{12} \vspace{1mm} \\ 
	\frac{1}{2}, \frac{3}{5}, \frac{2}{3}, \frac{4}{5}, 1, 1, 1, \frac{6}{5}, \frac{4}{3}, \frac{7}{5} 
	\end{matrix} \right]_{j} \cdot \\
	& \text{\footnotesize $(1186332672 j^8+416677888 j^7-203526144 j^6-65040384 j^5+$} \\ 
	& \text{\footnotesize $8069888 j^4+1789952 j^3-110576 j^2+1584 j+315)$}.
	\end{align*}
\end{example}

\subsection{An acceleration based on $\frac{(a)_k (b)_k}{(n)_k (a+n)_k}$}\label{subsectionaplusn}
 By setting 
\begin{equation}\label{Faplusn}
 F(n, k) = \frac{(a)_k (b)_k}{(n)_k (a+n)_k}, 
\end{equation}
 we may obtain a new hypergeometric transform that is of a similar character relative to Theorem \ref{firstmaintheorem} and that we may apply to 
 prove the motivating examples given as \eqref{55pi18} and \eqref{4thirdscube}. 

\begin{theorem}
	Writing $f(n) = \sum_{k=0}^{\infty} F(n, k)$, with $F(n, k)$ as in \eqref{Faplusn}, the recursion $$ f(n) = \frac{b-2 b n+n (-2+ a +3 n)}{(b - 2 
		n) (1+b-2 n)}+\frac{ (a - n) (b - n) (a - b + n) }{(b - 2 n) (1+b-2 n) (a+n)} f(n+1) $$ holds true if the series corresponding to $f(n)$ and 
	$f(n+1)$ converge. 
\end{theorem}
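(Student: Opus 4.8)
The plan is to follow the same Zeilberger-based template already used for Theorem \ref{firstmaintheorem} and the $\frac{(a)_k(b)_k}{(n)_k(2n)_k}$ case, now with $F(n,k)$ as in \eqref{Faplusn}. First I would feed $F(n,k)=\frac{(a)_k(b)_k}{(n)_k(a+n)_k}$ to the Maple implementation of Zeilberger's algorithm (with respect to $n$, shift $r=1$) to produce a first-order recurrence $p_1(n)F(n+1,k)+p_2(n)F(n,k)=G(n,k+1)-G(n,k)$ together with the rational certificate $R(n,k)$ for which $G(n,k)=R(n,k)F(n,k)$. From the claimed statement one can read off that $p_2(n)$ should be (a constant multiple of) $(b-2n)(1+b-2n)(a+n)$ and $p_1(n)$ should be $-(a-n)(b-n)(a-b+n)$, with the inhomogeneous part $-G(n,0)$ matching $b-2bn+n(-2+a+3n)$ times the appropriate factor; so the computation is really just a verification that these are the outputs.

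Next I would apply the partial-sum operator $\sum_{k=0}^{m}$ to both sides of the difference equation, use the telescoping of the right-hand side to get
$$p_1(n)\sum_{k=0}^{m}F(n+1,k)+p_2(n)\sum_{k=0}^{m}F(n,k)=G(n,m+1)-G(n,0),$$
and then let $m\to\infty$. Here I need that $\lim_{\ell\to\infty}G(n,\ell)=0$ for $n$ in the range where both $f(n)$ and $f(n+1)$ converge; this follows from the ratio test applied to $G(n,k)=R(n,k)F(n,k)$, since $R$ is a fixed rational function in $k$ and $F(n,k)$ has the same decay in $k$ as $f(n)$'s summand (the extra free lower parameter $a+n$ only helps convergence). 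That yields the telescoped identity \eqref{telescopemain} for $r=1$, i.e. $p_1(n)f(n+1)+p_2(n)f(n)=-G(n,0)$, which upon dividing by $p_2(n)$ rearranges exactly into the stated recursion $f(n)=\frac{b-2bn+n(-2+a+3n)}{(b-2n)(1+b-2n)}+\frac{(a-n)(b-n)(a-b+n)}{(b-2n)(1+b-2n)(a+n)}f(n+1)$.

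The only genuine work is confirming that the symbolic output of Zeilberger's algorithm really does simplify to the compact rational functions displayed in the theorem — in particular that the common polynomial factors between the raw $p_1,p_2$ and the denominator of $R$ cancel so cleanly, leaving $g_2(n)=\frac{(a-n)(b-n)(a-b+n)}{(b-2n)(1+b-2n)(a+n)}$ — and checking the growth condition on $G$. I expect the main (though still routine) obstacle to be the algebraic bookkeeping in that cancellation; as in the earlier subsections, it is legitimate simply to report the Maple/Mathematica output and note that \eqref{requiredvanishing} is easily seen to vanish, so the proof can be kept as terse as the proof of Theorem \ref{firstmaintheorem}.
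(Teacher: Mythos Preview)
Your proposal is correct and follows essentially the same approach as the paper: apply Zeilberger's algorithm to \eqref{Faplusn} to obtain the certificate $R(n,k)$ and polynomials $p_1(n),p_2(n)$, telescope the partial sums, let $m\to\infty$ using the decay of $G(n,m+1)$, and divide through by $p_2(n)$. The paper simply records the explicit outputs $R(n,k)=-((a+n)(n(a+2k+3n-2)-b(k+2n-1)))$, $p_1(n)=-a^2b+a^2n+ab^2-abn-b^2n+2bn^2-n^3$, and $p_2(n)=ab^2-4abn+ab+4an^2-2an+b^2n-4bn^2+bn+4n^3-2n^2$, which factor exactly as you anticipate.
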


\begin{proof}
	We again set $F(n, k)$ to be as in \eqref{Faplusn}, and we proceed to apply Zeilberger's algorithm. This gives us a companion function $G(n, k) = R(n, k) 
	F(n, k)$, with $R(n, k) = -((a+n) (n (a+2 k+3 n-2)-b (k+2 n-1)))$. We find that $(F, G)$ satisfies \eqref{maindifference}, 
	with $p_{1}(n) = -a^2 b+a^2 n+a 
	b^2-a b n-b^2 n+2 b n^2-n^3$ and $p_{2}(n) = a b^2-4 a b n+a b+4 a n^2-2 a n+b^2 n-4 b n^2+b n+4 n^3-2 n^2$, according to the notation 
	in \eqref{maindifference} for the $r = 1$ case. Again, we may obtain, from \eqref{maindifference}, a relation of the form indicated in 
	\eqref{telescopemain}, with the desired result being equivalent to 
	\eqref{generalf}, again for the pair corresponding to \eqref{Faplusn}, 
	since it is easily seen that the limit shown in \eqref{requiredvanishing} vanishes. 
\end{proof}

\begin{example}
	Setting $(a, b, n) = \left( \frac{1}{4}, \frac{1}{4}, 1 \right)$, we may obtain the following formula given as Example 19 in \cite{Chu2021}: 
	$$ \frac{21 \pi }{8 \sqrt{2}} = \sum_{j = 0}^{\infty} \left( \frac{1}{4} \right)^{j} \left[ \begin{matrix} \frac{3}{4}, \frac{3}{4}, 1 \vspace{1mm} \\ 
	\frac{5}{4}, \frac{11}{8}, \frac{15}{8} \end{matrix} \right]_{j} (12 j^2+15 j+4). $$ 
\end{example}

\begin{example}
	Setting $(a, b, n) = \left( \frac{1}{3}, \frac{1}{3}, 1 \right)$, we may obtain a new proof of Example 78 from Chu and Zhang's article 
	\cite{ChuZhang2014}: $$ \frac{20 \pi }{9 \sqrt{3}} = \sum_{j = 0}^{\infty} \left( \frac{1}{4} \right)^{j} \left[ \begin{matrix} \frac{2}{3}, 
	\frac{2}{3}, 1 \vspace{1mm} \\ \frac{4}{3}, \frac{4}{3}, \frac{11}{6} \end{matrix} \right]_{j} (9 j^2+11 j+3). $$ 
\end{example}

\begin{example}
	Setting $(a, b, n) = \left( \frac{1}{6}, \frac{1}{6}, 1 \right)$, we may obtain the motivating example highlighted in \eqref{55pi18}. 
\end{example}

\begin{example}
	Setting $(a, b, n) = \left( \frac{1}{6}, \frac{1}{3}, \frac{5}{6} \right)$, we may obtain the motivating example in \eqref{4thirdscube}. 
\end{example}

\subsection{An acceleration based on $\frac{(a)_k (b)_k}{(n)_k (a+2 n)_k}$}\label{subsectionnka2nk}
We set 
\begin{equation}\label{Fnka2nk}
F(n, k) = \frac{(a)_k (b)_k}{(n)_k (a+2 n)_k} 
\end{equation}
and again apply our Zeilberger-based acceleration method. 

\begin{theorem}\label{theoremnka2nk}
	Letting $f(n) = \sum_{k=0}^{\infty} F(n, k)$, for $F(n, k)$ as in \eqref{Fnka2nk}, the recursion $f(n) = g_{1}(n) + g_{2}(n) f(n+1)$ holds true, where 
	$$ g_{2}(n) = \frac{2 (2 n+1) (a-n) (n-b) (a-b+2 n) (a-b+2 n+1)}{(a+2 n) (a+2 n+1) (b-3 n-1) (b-3 n) (b-3 n+1)}, $$ and where $g_{1}(n)$ is given by 
	the following Mathematica output. 
	\begin{verbatim}
	(a (-1 + b) b - (2 a^2 - 2 (-1 + b) b + a (-3 + b (3 + b))) n - 
	2 (-3 + a + 2 a^2 - 8 a b + b (3 + 4 b)) n^2 + (4 - 31 a + 
	38 b) n^3 - 
	46 n^4)/((-1 + b - 3 n) (b - 3 n) (1 + b - 3 n) (a + 2 n))
	\end{verbatim}
\end{theorem}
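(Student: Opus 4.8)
The plan is to follow exactly the template established in the proofs of Theorem~\ref{firstmaintheorem} and the proof of the earlier $\frac{(a)_k(b)_k}{(n)_k(2n)_k}$-recursion, since the function in \eqref{Fnka2nk} satisfies the five conditions of Section~\ref{sectionmethod} for the same reasons. First I would apply the Maple implementation of Zeilberger's algorithm to $F(n,k) = \frac{(a)_k(b)_k}{(n)_k(a+2n)_k}$ to extract the certificate, i.e. the rational function $R(n,k)$ with companion $G(n,k) = R(n,k)F(n,k)$, together with the two polynomials $p_1(n)$ and $p_2(n)$ appearing in the order-one recurrence \eqref{maindifference} with $r=1$; Zeilberger's algorithm guarantees that such $R$, $p_1$, $p_2$ exist because condition~(1) holds for this $F$. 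As in the earlier sections, these expressions are unwieldy (the denominator factor $k+2n$ in $R$ comes from the shift in the Pochhammer symbol $(a+2n)_k$), so I would simply record the outputs rather than derive them by hand.

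Next I would verify that the recurrence \eqref{maindifference} indeed telescopes: applying $\sum_{k=0}^{m}\cdot$ to both sides collapses the right-hand side to $G(n,m+1)-G(n,0)$, and then letting $m\to\infty$ kills $G(n,m+1)$ for $n$ large enough that $f(n)$ converges, which is condition~(4). This yields \eqref{telescopemain}, namely $p_1(n)f(n+1)+p_2(n)f(n) = -G(n,0)$, and dividing through by $p_2(n)$ gives \eqref{generalf}. I would then check that $\frac{-G(n,0)}{p_2(n)}$ simplifies to the stated $g_1(n)$ and that $-\frac{p_1(n)}{p_2(n)}$ simplifies to the stated $g_2(n)$; the factored form for $g_2(n)$ — with numerator $2(2n+1)(a-n)(n-b)(a-b+2n)(a-b+2n+1)$ and denominator $(a+2n)(a+2n+1)(b-3n-1)(b-3n)(b-3n+1)$ — should emerge after cancelling common factors between $p_1(n)$ and $p_2(n)$, and the pattern of shifts in the denominator ($b-3n\pm 1$, $a+2n$, $a+2n+1$) mirrors the $\frac{4}{27}$-rate case and is exactly what one expects from the two lower $n$-Pochhammer expressions $(n)_k$ and $(a+2n)_k$.

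Finally I would confirm that the vanishing condition \eqref{requiredvanishing} holds, so that the inductive unrolling of $f(n) = g_1(n)+g_2(n)f(n+1)$ is legitimate and does not merely restate a formal identity; this follows, as in the previous proofs, from the rate of decay of $\prod_{i=0}^{m} g_2(n+i)$ against the growth of $f(n+m+1)$, and is "easily seen" in the same sense used earlier in the excerpt. I expect the only genuine obstacle to be the algebraic bookkeeping: checking that the raw Zeilberger output for $\frac{-G(n,0)}{p_2(n)}$ collapses to the compact quartic-over-cubic form quoted as the Mathematica output for $g_1(n)$, and that the spurious factor $k+2n$ in $R(n,k)$ cancels correctly when $k=0$ so that $G(n,0)$ is a polynomial (condition~(5), $G(n,0)\neq 0$). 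This is a finite computation best delegated to a CAS, so in the write-up I would, consistent with Sections~\ref{subsection256} and~\ref{subsection3125}, either display the certificate verbatim or leave the routine simplification to the reader.
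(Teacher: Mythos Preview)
Your plan is correct and follows essentially the same route as the paper's own proof: apply Zeilberger's algorithm to obtain the certificate $R(n,k)$ and the polynomials $p_1(n)$, $p_2(n)$ in the $r=1$ difference equation \eqref{maindifference}, telescope, and identify $g_1(n)=-G(n,0)/p_2(n)$ and $g_2(n)=-p_1(n)/p_2(n)$, noting that \eqref{requiredvanishing} vanishes. One small slip: the denominator factor in $R(n,k)$ is $a+k+2n$, not $k+2n$, and at $k=0$ it cancels against the explicit $(a+2n)$ factor in the numerator of $R$, which is why the residual $(a+2n)$ in the stated $g_1(n)$ comes from $p_2(n)$ rather than from $G(n,0)$.
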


\begin{proof}
	We set $F(n, k)$ as in \eqref{Fnka2nk}.
	Using Zeilberger's algorithm, we obtain the companion function $G(n, k) = F(n, k) R(n, k)$, 
	for the rational function $R(n, k)$ given by the following Mathematica output. 
	\begin{verbatim}
	(1/(a + k + 
	2 n))(a + 2 n) (1 + a + 
	2 n) ((-1 + b) b (-1 + k) (a + k) + (2 a^2 + 3 (-1 + k) k + 
	b^2 (-2 + 5 k) + a (-3 + b (3 + b - 8 k) + 5 k) + 
	b (2 + k - 6 k^2)) n + (-6 + 4 a^2 + 8 b^2 + b (6 - 28 k) + 
	k (4 + 9 k) + a (2 - 16 b + 15 k)) n^2 + (-4 + 31 a - 38 b + 
	39 k) n^3 + 46 n^4)
	\end{verbatim}
	This gives us a difference equation of the form 
	$p_{1}(n) F(n+1,k) + p_{2}(n) F(n, k) = G(n, k+1) - G(n, k)$. 
	the polynomial $p_{1}(n)$ is given by the following Mathematica output. 
	\begin{verbatim}
	(4*a^3*b*n - 4*a^3*n^2 - 8*a^2*b^2*n + 20*a^2*b*n^2 - 12*a^2*n^3 + 
	4*a*b^3*n - 12*a*b^2*n^2 + 8*a*b*n^3 - 4*b^3*n^2 + 20*b^2*n^3 - 
	32*b*n^4 + 16*n^5 + 2*a^3*b - 2*a^3*n - 4*a^2*b^2 + 14*a^2*b*n - 
	10*a^2*n^2 + 2*a*b^3 - 10*a*b^2*n + 12*a*b*n^2 - 4*a*n^3 - 2*b^3*n + 
	14*b^2*n^2 - 28*b*n^3 + 16*n^4 + 2*a^2*b - 2*a^2*n - 2*a*b^2 + 
	4*a*b*n - 2*a*n^2 + 2*b^2*n - 6*b*n^2 + 4*n^3)
	\end{verbatim}
	The polynomial $p_{2}(n)$ is given by the following Mathematica output. 
	\begin{verbatim}
	-4*a*b*n - a^2*b + -27*a^2*n^3 - 108*n^5 + 27*a^2*b*n^2 - 
	36*a*b^2*n^2 + 108*a*b*n^3 - 4*b*n^2 - 18*b^2*n^2 + 54*b*n^3 - a*b + 
	2*b^3*n + 4*a*b^3*n - 9*a^2*b^2*n + 12*a*n^2 + 3*a*n + a*b^3 - 
	54*n^4 + 12*n^3 + 6*n^2 - 2*n*b + 27*a*b*n^2 - 9*a*b^2*n + 3*a^2*n + 
	a^2*b^3 + 4*b^3*n^2 - 36*b^2*n^3 + 108*b*n^4 - 108*a*n^4 - 27*a*n^3
	\end{verbatim}
	Summing both sides of the given difference equation with respect to $k$, 
	we may mimic our previous proofs to obtain the desired result, with \eqref{requiredvanishing} 
	again vanishing. 
\end{proof}

\begin{example}
	Setting $(a, b, n) = \left( \frac{1}{4}, \frac{1}{4}, 1 \right)$, we may obtain that 
	\begin{align*}
	\frac{155925 \pi }{128 \sqrt{2}} = \sum_{j = 0}^{\infty} \left( 
	\frac{4}{27} \right)^{j} & \left[ \begin{matrix} \frac{3}{4}, \frac{3}{4}, 1, \frac{3}{2}, \frac{3}{2} \vspace{1mm} \\ \frac{19}{12}, \frac{13}{8}, 
	\frac{23}{12}, \frac{17}{8}, \frac{9}{4} \end{matrix} \right]_{j} \cdot \\
	& (2944 j^4+11408 j^3+16288 j^2+10125 j+2304). 
	\end{align*}
\end{example}

\begin{example}
	Setting $(a, b, n) = \left( \frac{1}{3}, \frac{1}{3}, 1 \right)$, we may obtain that $$ \frac{24640 \pi }{81 \sqrt{3}} = \sum_{j = 0}^{\infty} \left( 
	\frac{4}{27} \right)^{j} \left[ \begin{matrix} \frac{2}{3}, 1, \frac{3}{2}, \frac{3}{2} \vspace{1mm} \\ \frac{14}{9}, \frac{17}{9}, \frac{13}{6}, 
	\frac{20}{9} \end{matrix} \right]_{j} (414 j^3+1323 j^2+1397 j+486). $$ 
\end{example}

\begin{example}
	Setting $(a, b, n) = \left( \frac{1}{6}, \frac{1}{6}, 1 \right)$, we may obtain that 
	\begin{align*}
	\frac{1956955 \pi }{648} = \sum_{j = 0}^{\infty} \left( 
	\frac{4}{27} \right)^{j} & \left[ \begin{matrix} \frac{5}{6}, \frac{5}{6}, 1, \frac{3}{2}, \frac{3}{2} \vspace{1mm} \\ \frac{19}{12}, \frac{29}{18}, 
	\frac{35}{18}, \frac{25}{12}, \frac{41}{18} \end{matrix} \right]_{j} \cdot \\
	& (9936 j^4+38628 j^3+55236 j^2+34315 j+7776). 
	\end{align*} 
\end{example}

\subsection{An acceleration based on $\frac{(a)_k (b)_k}{(2 n)_k (a+n)_k}$}\label{subsectionakbk2nk}
We set 
\begin{equation}\label{Fakbk2nk}
F(n, k) = \frac{(a)_k (b)_k}{(2 n)_k (a+n)_k} 
\end{equation}
and again apply our Zeilberger-based acceleration method. This may be used to obtain a hypergeometric transform much like Theorem 
\ref{theoremnka2nk}. For the sake of brevity, we leave it to the reader to fill in the details in regard to our transform obtained, via Zeilberger's 
algorithm, from \eqref{Fakbk2nk}. 

\begin{example}
	Setting $(a, b, n) = \left( \frac{1}{4}, \frac{1}{4}, \frac{1}{2} \right)$, we may obtain the motivating example highlighted in \eqref{45sqrt2}. 
\end{example}

\begin{example}
	Setting $(a, b, n) = \left( \frac{1}{3}, \frac{1}{3}, \frac{2}{3} \right)$, we may obtain that $$ \frac{320 \pi }{9 \sqrt{3}} = \sum_{j = 0}^{\infty} 
	\left( \frac{4}{27} \right)^{j} \left[ \begin{matrix} \frac{1}{2}, \frac{1}{2}, \frac{2}{3}, 1 \vspace{1mm} \\ \frac{7}{6}, \frac{11}{9}, 
	\frac{14}{9}, \frac{17}{9} \end{matrix} \right]_{j} (414 j^3+702 j^2+359 j+54). $$ 
\end{example}

\begin{example}
	Setting $(a, b, n) = \left( \frac{1}{3}, \frac{1}{6}, \frac{1}{2} \right)$, we may obtain that $$ 28 \sqrt[3]{2} = \sum_{j = 0}^{\infty} \left( 
	\frac{4}{27} \right)^{j} \left[ \begin{matrix} \frac{1}{3}, \frac{5}{12}, \frac{2}{3}, \frac{11}{12} \vspace{1mm} \\ 1, \frac{10}{9}, \frac{13}{9}, 
	\frac{16}{9} \end{matrix} \right]_{j} (621 j^3+828 j^2+303 j+26). $$ 
\end{example}

\subsection{Series of convergence rate $-\frac{1}{4}$}\label{subsectionminusquarter}
Setting 
\begin{equation}\label{Ffornegativequarter}
F(n, k) = \frac{(-1)^k (a)_k (b)_k}{(a+n+1)_k (b+n+1)_k}, 
\end{equation}
this leads us, via our acceleration method, to the following hypergeometric transform that we are to apply to obtain series of convergence 
rate $-\frac{1}{4}$. 

\begin{theorem}\label{negativequarterrec}
	Writing $f(n) = \sum_{k=0}^{\infty} F(n, k)$ for $F(n, k)$ as in \eqref{Ffornegativequarter}, the recursion 
	\begin{align*}
	& f(n) = \frac{a (3+2 b+3 n)+(1+n) (6+3 b+5 n)}{4 (1+a+n) (1+b+n)} + \\
	& \hspace{1.35cm} \frac{ (-2+a-b-n) (1+n) (2+n) (2+a-b+n) }{4 (1+a+n) (2+a+n) (1+b+n) (2+b+n)} f(n+2)
	\end{align*}
	holds true if the series corresponding to the above $f$-expressions converge. 
\end{theorem}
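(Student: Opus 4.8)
The plan is to follow the same Zeilberger-based template used in the proofs of the earlier theorems in this section, now with the $r=2$ case of the master difference equation \eqref{maindifference} instead of $r=1$. First I would set $F(n,k)$ to be exactly the function in \eqref{Ffornegativequarter} and run Zeilberger's algorithm (via the Maple \texttt{SumTools[Hypergeometric]} package) on it with respect to $n$, looking for a shift by $2$ in the free variable. This should return a companion certificate $R(n,k)$ together with polynomials $p_{1}(n)$ and $p_{2}(n)$ so that
\begin{equation*}
p_{1}(n)\,F(n+2,k) + p_{2}(n)\,F(n,k) = G(n,k+1) - G(n,k),
\end{equation*}
where $G(n,k) = R(n,k)\,F(n,k)$; the appearance of shift $2$ rather than $1$ is natural here because the summand has \emph{two} lower $n$-Pochhammer expressions, $(a+n+1)_{k}$ and $(b+n+1)_{k}$, whose arguments differ by $a-b$, and the factor $(-1)^{k}$ forces the algebraic structure that produces the rate $-\tfrac14$.

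Next I would apply the partial-sum operator $\sum_{k=0}^{m}\cdot$ to both sides of that difference equation. The right-hand side telescopes to $G(n,m+1) - G(n,0)$, and I would check condition (4) from Section \ref{sectionmethod}: since every lower Pochhammer symbol grows and the alternating $(-1)^{k}$ does not change the order of magnitude, $\lim_{m\to\infty} G(n,m+1) = 0$ whenever $\sum_{k=0}^{\infty} F(n,k)$ converges (which holds for $|{-1}|<1$ in the relevant ${}_{3}F_{2}$ sense once the parameters are admissible). Letting $m\to\infty$ then yields $p_{1}(n) f(n+2) + p_{2}(n) f(n) = -G(n,0)$, i.e.\ the $r=2$ instance of \eqref{telescopemain}. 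Dividing through by $p_{2}(n)$ and rearranging gives $f(n) = g_{1}(n) + g_{2}(n) f(n+2)$ with $g_{1}(n) = -G(n,0)/p_{2}(n)$ and $g_{2}(n) = -p_{1}(n)/p_{2}(n)$.

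The remaining work is to verify that the rational functions produced by the algorithm simplify to precisely the two displayed coefficients: $g_{2}(n)$ should reduce to
\begin{equation*}
\frac{(-2+a-b-n)(1+n)(2+n)(2+a-b+n)}{4(1+a+n)(2+a+n)(1+b+n)(2+b+n)}
\end{equation*}
and $g_{1}(n)$ to $\dfrac{a(3+2b+3n)+(1+n)(6+3b+5n)}{4(1+a+n)(1+b+n)}$. This is a routine but somewhat bulky partial-fraction / common-denominator check, most conveniently done by confirming that $G(n,0)$ and the raw $p_{1}(n),p_{2}(n)$ factor as claimed (note that $G(n,0)$ carries the $1/(0 + \text{stuff})$ that ultimately clears against $p_{2}(n)$); the factor $\tfrac14$ is exactly the reciprocal of the leading coefficient that produces the convergence rate $-\tfrac14$ in the accelerated series. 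Finally, as in the earlier proofs, I would observe that the auxiliary limit \eqref{requiredvanishing} (adapted to the $r=2$ step, i.e.\ $\lim_{m\to\infty}\prod_{i}g_{2}(n+2i)\,f(n+2m+2)$) vanishes, since $|g_{2}(n)|\to\tfrac14$ and $f(n)\to$ a finite limit, so the telescoped recursion is equivalent to the stated one.

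The main obstacle I anticipate is purely bookkeeping: confirming the factorizations of the large $p_{1}(n)$ and $p_{2}(n)$ output by Zeilberger's algorithm and matching them against the compact closed forms above — in particular getting the $r=2$ shift right so that the two lower parameters $(a+n+1)_k$ and $(b+n+1)_k$ both increment by the integer step that keeps the ratio of consecutive summands rational, and tracking the sign contributed by $(-1)^{k}$ through the certificate. There is no conceptual difficulty beyond what already appears in Theorem \ref{firstmaintheorem} and Theorem \ref{theoremnka2nk}; the verification of \eqref{requiredvanishing} and of $\lim G(n,m+1)=0$ goes through verbatim once the admissible range of $(a,b,n)$ is noted.
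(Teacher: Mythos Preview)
Your proposal is correct and follows essentially the same approach as the paper: apply Zeilberger's algorithm with shift $r=2$ to obtain $G(n,k)=R(n,k)F(n,k)$ and polynomials $p_{1}(n),p_{2}(n)$, telescope the $k$-sum, and simplify to the displayed $g_{1}(n),g_{2}(n)$, noting that \eqref{requiredvanishing} vanishes. The paper additionally records the explicit certificate $R(n,k)$ and the factored forms $p_{1}(n)=(1+n)(2+n)\bigl((a-b)^{2}-(2+n)^{2}\bigr)$ and $p_{2}(n)=-4(1+a+n)(2+a+n)(1+b+n)(2+b+n)$, which are exactly the factorizations you anticipate having to verify.
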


\begin{proof}
	We again set $F(n, k)$ to be as in \eqref{Ffornegativequarter}. Let us write $G(n, k) = F(n, k) R(n, k)$, where
	$R(n, k)$ is equal to the following rational function: 
	\begin{align*}
	& \frac{(a+n+1) (a+n+2) (b+n+1) (b+n+2)}{(a+k+n+1) (b+k+n+1)} \cdot \\
	& \left( a (2 b+2 k+3 n+3)+b (2 k+3 n+3)+6 k n+2 k (k+3)+5 n^2+11 n+6 \right).
	\end{align*}
	In this case, we find that the $r = 2$ case of \eqref{maindifference} holds, with $p_{1}(n) = (1 + n) (2 + n) ((a - b)^2 - (2 + n)^2)$ and $p_{2}(n) = -4 
	(1 + a + n) (2 + a + n) (1 + b + n) (2 + b + n)$. From the difference equation $ p_{1}(n) F(n + 2, k) + p_{2}(n) F(n, k) = G(n, k+1) - G(n, k)$, we 
	may obtain an equivalent formulation of the desired result by summing both sides with respect to $k$ and by making use of a telescoping effect, again 
	with \eqref{requiredvanishing} vanishing. 
\end{proof}

\begin{example}
	Setting $ a = \frac{1}{2}$ and $b = \frac{1}{2}$ and $n = 0$, we obtain Lupa{\c{s}}' series evaluation for $G$ \cite{Lupas2000} given in an equivalent 
	formulation below: $$ 18 G = \sum_{j = 
		0}^{\infty} \left( -\frac{1}{4} \right)^{j} \left[ \begin{matrix} \frac{1}{2}, 1, 1, 1 \vspace{1mm} \\ 
	\frac{5}{4}, \frac{5}{4}, \frac{7}{4}, \frac{7}{4} \end{matrix} \right]_{j} \left(40 j^2+56 j+19\right). $$ 
\end{example}

As indicated in the Wolfram Mathworld entry on Catalan's constant, the above formula by Lupa{\c{s}} \cite{Lupas2000} is used to calculate $G$ in the 
Wolfram Language. This motivates our generalization of Lupa{\c{s}}' formula given by Theorem \ref{negativequarterrec}, and our 
applications of this Theorem, as below. 

\begin{example}
	Setting $ a = -\frac{1}{4}$ and $b = \frac{1}{2}$ and $ n = \frac{1}{4}$, we obtain 
	$$ 21 \sqrt{2} = 
	\sum_{j=0}^{\infty} \left( -\frac{1}{4} \right)^{j} \left[ \begin{matrix} 
	\frac{5}{8}, \frac{3}{4}, \frac{9}{8} \vspace{1mm} \\ 
	1, \frac{11}{8}, \frac{15}{8}
	\end{matrix} \right]_{j} \left(80 j^2+114 j+39\right). $$
\end{example}

\begin{example}
	Setting $ a = \frac{1}{3}$ and $b = \frac{1}{2}$ and $ n = -\frac{1}{2}$, we obtain the motivating example in \eqref{5sqrt3}. 
\end{example}

\begin{example}
	Setting $ a = -\frac{5}{6}$ and $b = \frac{2}{3}$ and $ n = \frac{5}{6}$, 
	we obtain the motivating example highlighted in \eqref{motivatingcube1}. 
\end{example}

\subsection{Series of convergence rate $-\frac{1}{27}$}\label{subnegative27}
Setting 
\begin{equation}\label{Fcubed}
F(n, k) = \frac{ (a)_k^3}{ (n)_k^3} 
\end{equation}
and $ f(n) = \sum_{k=0}^{\infty} F(n, k)$, 
we may apply the procedure indicated in Section \ref{sectionmethod} 
to obtain a recurrence of the form 
\begin{equation}\label{notMarkov}
f(n) = g_{1}(n) + g_{2}(n) f(n+2), 
\end{equation}
i.e., with $r = 2$ according to the notation in \eqref{generalf}. This leads us to an acceleration identity of the form $$ f(n) = \sum _{j=0}^\infty 
\left(\prod _{i=0}^{j-1} g_{2}(n+2 i)\right) g_{1}(n + 2 j). $$ The $R(n, k)$-certificate obtained from \eqref{Fcubed} via Zeilberger's algorithm is more 
unwieldy compared to Section \ref{subsectionminusquarter}, and similarly for the required polynomial coefficients, so we leave it to the reader to verify, 
using Maple, the acceleration formula in \eqref{notMarkov}. 

\begin{example}
	Setting $a = 1$ and $ n = 2$, we may obtain an equivalent version of a series evaluation found by Amdeberhan \cite{Amdeberhan1996} in 1996: $$ 24 
	\zeta (3) = \sum_{j = 0}^{\infty} \left( -\frac{1}{27} \right)^{j} 
	\left[ \begin{matrix} 
	1, 1, 1 \vspace{1mm} \\ 
	\frac{4}{3}, \frac{5}{3}, 2 
	\end{matrix} \right]_{j} 
	\frac{56 j^2+80 j+29}{(2 j+1)^2}. $$
\end{example}

\begin{example}
	Setting $a = \frac{1}{2}$ and $n = \frac{3}{2}$, we may obtain the motivating example given in
	\eqref{motivatingzeta1}. 
\end{example}

\section{Conclusion}
We conclude by briefly considering some further subjects for exploration. 

Although our article is mainly based on the acceleration method described in Section \ref{sectionmethod}, we have discovered how recurrences as in 
\eqref{notMarkov} may be applied in conjunction with recursions that are not of the form described in Section \ref{sectionmethod} to obtain new results 
and new proofs of past results. For example, the recurrence 
\begin{align*}
& {}_{4}F_{3}\!\!\left[ 
\begin{matrix} 
x, x, x, 1 \vspace{1mm}\\ 
x + y, x + y, x + y 
\end{matrix} \ \Bigg| \ 1 \right] = \frac{p(x, y)}{3(3y-1)(3y+1) (x +y )^3} - \\ 
& \frac{y^3 (y+1)^5}{3(3y-1)(3y+1)(x+y)^3(x+y+1)^3} \, 
{}_{4}F_{3}\!\!\left[ 
\begin{matrix} 
x, x, x, 1 \vspace{1mm}\\ 
x + y + 2, x + y + 2, x + y + 2 
\end{matrix} \ \Bigg| \ 1 \right] 
\end{align*}
for the polynomial $p(x, y) = 28 y^5+2 (39 x +1) y^4+(87 x^2-3 x -2) y^3+9 x (5 x^2-1) y^2+3 
x^2 (3 x^2+2 x -3) y +3 x^3 (x -1)$ 
can be shown to be equivalent to \eqref{notMarkov}, 
and if we apply this recurrence together with 
\begin{align*}
& {}_{4}F_{3}\!\!\left[ 
\begin{matrix} 
x, x, x, 1 \vspace{1mm}\\ 
x + y, x + y, x + y
\end{matrix} \ \Bigg| \ 1 \right] = 
1 + \frac{x^3}{(x + 
	y)^3} \, 
{}_{4}F_{3}\!\!\left[ 
\begin{matrix} 
x + 1, x + 1, x + 1, 1 \vspace{1mm}\\ 
x + 1 + y, x + 1 + y, x + 1 + y
\end{matrix} \ \Bigg| \ 1 \right] 
\end{align*}
i.e., by applying the above recurrences in a successive fashion, this can be used to prove that $$ \zeta(3)=\frac{1}{16} 
\sum_{n=1}^{\infty} (-1)^{n-1} \text{\footnotesize $\frac{40885n^5 - 80079n^4 + 61626n^3 - 23366n^2 + 4374n - 324}{n^5(2n - 1)^3}$}\frac{1}{\binom{3n}{2n}^4\binom{2n}{n}}.$$ This series was found in 2005 by Mohammed \cite{Mohammed2005}. \\[0.3cm] 
Writing $s(x, y)$ in place of the ${}_{4}F_{3}$-expressions on the left-hand sides of the above recurrences, 
by writing $s(x,y)$ as a function of $s(x+2,y+2)$, this leads to the equality of $-8 \zeta(3)$ and
$$ \sum_{n=1}^{\infty} (-1)^{n} 
\text{\footnotesize $\frac{126392n^5 - 219252n^4 + 144666n^3 - 46039n^2 + 7128n - 432}{n^5(2n - 1)^3}$} \frac{1}{\binom{4n}{2n}^3\binom{3n}{n}\binom{2n}{n}}.$$ 
This series was used by Sebastian Wedeniwski \cite{Mohammed2005} to calculate Ap\'ery's constant with several million correct decimal places. 
We leave it to a separate project to fully 
explore the application of our method in Section \ref{sectionmethod} 
in conjunction with recurrences that are not of the form indicated in Section \ref{sectionmethod}. 

Our applications of \eqref{maindifference} and generalizations of this difference equation lead us to consider variants of the recurrences we have derived. 
Is there a way to use  the   WZ   method 
    to find such non-homogeneous first order linear recurrence relations? An area of a similar nature to explore has to do with the 
use of symmetry in the application of hypergeometric recursions for series accelerations. For example, we have experimentally discovered that 
\begin{align*}
& {}_{3}F_{2}\!\!\left[ \begin{matrix} x, y, 1 \vspace{1mm}\\ x+y+ \frac{1}{2}, x + y + \frac{1}{2} \end{matrix} \ \Bigg| \ 1 \right] = 
\frac{x + 2y}{x + y} \ - \\ 
& \frac{y(2y+1)^2}{(2x+2y+1)^2 (x + y)} 
{}_{3}F_{2}\!\!\left[ 
\begin{matrix} 
x, y+1, 1 \vspace{1mm}\\ 
x + y + \frac{3}{2}, x + y + \frac{3}{2} 
\end{matrix} \ \Bigg| \ 1 \right], 
\end{align*}
and we may obtain similar results via our acceleration method, and we are led to consider the above recurrence together with the corresponding recurrence 
obtained, via symmetry, by permuting $x$ and $y$. Using both recurrences, one after the other, leads us to new results as in the following: 
\begin{align*}
& \frac{\sqrt{2}}{2} = \sum_{n=0}^\infty \frac{3 + 184n -336n^2}{(4n-1)(4n-3)}\frac{\binom{4n}{2n}}{2^{10n}}, \\ 
& \frac{\pi^2}{2} = \sum_{n=1}^\infty \frac{(14n-3)(3n-1)}{n^3(2n-1)}\frac{16^n}{\binom{4n}{2n}^2\binom{2n}{n}}. 
\end{align*}

\end{document}